\newcommand\SO{\operatorname{SO}}
\newcommand\Img{\operatorname{Im}}
\newcommand\Rea{\operatorname{Re}}
\newcommand\cro{\operatorname{cr}}
\newcommand\VR{\operatorname{VR}}
\newcommand\Ctxt{C}
\newcommand\Ntxt{N}
\newcommand\Cds{\mathds{C}}
\newcommand\Kds{\mathds{K}}
\newcommand\Hds{\mathds{H}}
\newcommand\Rds{\mathds{R}}
\newcommand\Zds{\mathds{Z}}
\newcommand\varparallel{\mathrel{/\mskip-2.5mu/}}
\newcommand{\Rnum}[1]{\expandafter\@slowromancap\romannumeral #1@}
\newtheorem{theorem}{Theorem}[section]
\newtheorem{Def}[theorem]{Definition}  %def 表示definition
\newtheorem{lem}[theorem]{Lemma}  %lem 表示lemma
\newtheorem{thm}[theorem]{Theorem}  %the 表示theorem
\newtheorem{cor}[theorem]{Corollary}  %cor 表示corollary
\newtheorem{prop}[theorem]{Proposition}
\theoremstyle{definition}
\newtheorem{rmk}[theorem]{Remark}
\begin{document}
\title{New Aspects of Analyzing Amyloid Fibrils}  %标题
\author{Xiaoxi Lin, Yunpeng Zi, Fengling Li and Jingyan Li}  % 作者
\date{}  %时间 加上{}可使得时间不显示
\maketitle

\begin{abstract}
	This is a summary of mathematical tools we used in research of analyzing the structure of proteins with amyloid form \cite{xi2024Top}.
	We defined several geometry indicators on the discrete curve namely the hop distance, the discrete curvature and the discrete torsion. Then, we used these indicators to analyze the structure of amyloid fibrils by regarding its peptide chains as discrete curves in $\Rds^3$. We gave examples to show that these indicators give novel insights in the characterization analysis of the structure of amyloid fibrils, for example the discrete torsion can detect the hydrogen bonds interactions between layers of amyloid fibril. {Moreover,} the topological tool performs better than the root mean square deviation (RMSD) in quantifying the difference of the structure of amyloid fibrils, etc.
\end{abstract}

\section{Introduction}

This is a summary of mathematical tools we used in research of analyzing the structure of proteins with amyloid form \cite{xi2024Top}. Amyloids are aggregates of proteins with a fibrillar morphology, which results from the protein misfolding. This misfolding may cause the protein lose its original structure and physiological functions, and get new structure and physiological functions. Amyloids have been implicated in both physiological processes and pathological diseases, which can be termed as pathological amyloids and functional amyloids \cite{hammer2008amyloids}, respectively. The abnormal depositions of pathological amyloids in various tissues are associated with kinds of diseases known as amyloidosis. To date, there are more than 30 human proteins have been found to form amyloidosis \cite{chiti2017protein}, such as transthyretin (TTR), $\alpha$-synuclein \cite{irvine2008protein} and $\beta$ amyloid peptide \cite{hamley2012amyloid}.

Studies over the past 2 decades have made a great contribution to explore the molecular and physiochemical mechanisms of amyloidogenesis and its role in biology, including the overall architecture of amyloid fibril \cite{eisenberg2012amyloid, chen2017amyloid}, the interactions related to the stability of fibril structures \cite{chen2017amyloid}, the process of amyloid formation \cite{michaels2018chemical, xue2008systematic}, the amyloid core and flanking regions of fibril structures \cite{morgan2022transient, tompa2009structural}, the conformational dynamics \cite{matthes2023molecular, michaels2020dynamics}, etc. However, challenges faced by studies on amyloids are still multiple and significant \cite{strodel2021amyloid}. One of them is that how the structure of amyloid fibril determines its new biological functions. It is inevitable to analyze the structure of amyloid fibrils, and then to explore the relationship between the structure and its corresponding functions. The rapid development of structural biology methods, including cryo-electron microscopy (cryo-EM) and solid-state nuclear magnetic resonance (ssNMR), makes it possible to analyze the structure of amyloid fibrils at atomic level \cite{sawaya2021expanding}.

In this paper, we gave several ways of analyzing the structure of amyloid fibrils using the tools from topology and geometry, which could give insights into further studies on amyloid fibrils.
Examples in this paper are all related to the structure of amyloid fibrils formed from TTR (ATTR).
TTR is a homotetramer with a dimer of dimers and high $\beta$-sheet content. The native TTR transports the thyroxin hormone thyroxine and retinol to the liver. The misfolding and aggregation of TTR result in the formation of ATTR and amyloidosis \cite{zeldenrust2010familial}. The extracellular despositions of misfolded wild type TTR could lead to senile systemic amyloidosis, while the extracellular despositions of misfolded mutational variants, such as V30M and V122I, are associated with hereditary ATTR amyloidosis, familial amyloid polyneuropathy and familial amyloid cardiomyopathy. Before introducing our study, let us give some biological and chemical knowledges about the structure of protein and a {historical} introduction about the {intersection of geometry, topology and biology}.

\subsection{Primary Structures of Proteins}\label{subsec:strprotein}
%{The primary structure of protein.}
Protein is a class of large biomolecules, which plays important role in biochemical reactions. Amino acids (AA) are the constituent units of proteins.
% Each protein comprise one or more long chains of amino acids (AAs) which are called peptide chains. 
Specifically, each AA consists of an amino group (-$\text{NH}_2$), a carboxyl group (-COOH) and a side chain group (-R). These three parts are connected through a carbon atom, which we call it the $\alpha$-Carbon and denote it as $\text{C}_\alpha$. The type of AA is entirely determined by its side chain group. AAs form peptide bonds through dehydraton condensation, and then determine the primary structure of proteins, called the peptide chain which is shown in Fig.\ref{fig: peptide chain}. An individual AA in the peptide chain is called a residue. Each peptide chain has its backbone, which consists of amino nitrogen, $\text{C}_\alpha$, carboxyl carbon and carboxyl oxygen of each residue. Note that a peptide chain is naturally oriented by its formation, that is, a peptide chain always begin with a amino nitrogen and end with a carbonyl carbon.

%It is well known that the structure of protein largely determines its function.
%\begin{equation}\label{eq:str-pro}
%    \chemfig{ N_i(-[3]H)(-[5]C_{i-1})-C_{\alpha_i}(-[2]R)(-[4])-C_i(=[2]O)(-[4])-N_{i+1}(-[1]H)(-[7]C_{\alpha_{i+1}})} \hspace{0.5cm}
%\end{equation}

\begin{figure}[ht]
	\centering
	\includegraphics[height=4cm, width=10.5cm]{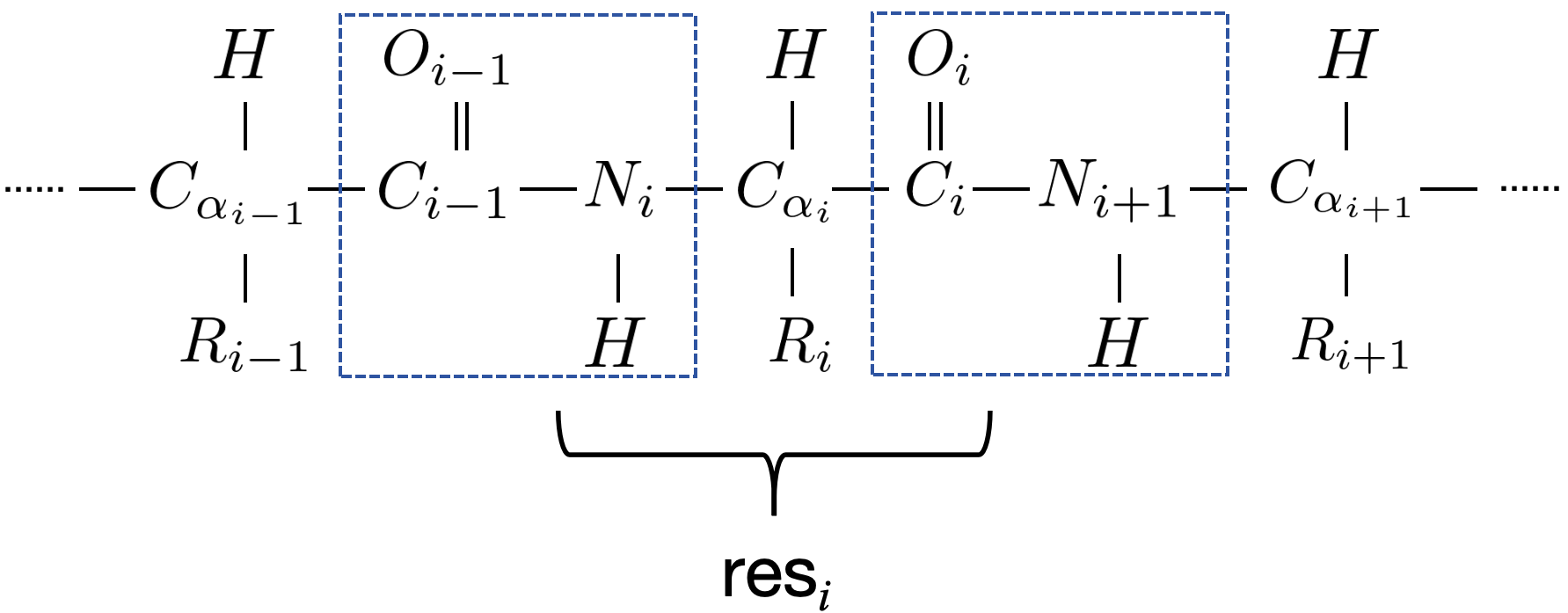}
	\caption{A peptide chain. The backbone of this peptide chain is the set $\{\dots, \text{C}_{\alpha_{i-1}}, \text{C}_{i-1}, \text{O}_{i-1},  \text{N}_{i}, \text{C}_{\alpha_{i}}, \text{C}_{i}, \text{O}_{i}, \dots\}.$ Blue rectangles display the peptide bonds. Here, $\text{res}_{i}$ represents the $i$-th residue of the peptide should be given in the caption.}
	\label{fig: peptide chain}
\end{figure}

\subsection{Geometry in Biology}\label{subsec:Geo-Bio}
The geometry plays an important role in the study of the structure of proteins and even other biomolecules. Scientists found that three-dimensional atomic structures of these molecules provide rich information for understanding how these molecules carry out their biological functions. Among all these research, we found several cutting points that geometry can enter the study of the structures of proteins or biomolecules.

As commonly understood, a molecule consists of moving particles(atomic nuclei and electrons) held together by electrostatic and magnetic forces in nature, lacking definitive boundaries. However, visualizing a molecule is still a very useful chemical concept and has proved its value in medicinal chemistry.

%The van der Waals surfaces, named for Johannes Diderik van der Waals a Dutch theoretical physicist and thermodynamicist is a good model for this purpose. 
The van der Waals radius of an atom, denoted as $r_w$, is the radius of an imaginary hard sphere representing the distance of closest approach for another atom. The van der Waals surface of a molecule is an abstract representation or model of that molecule by treating each atom as a sphere with the van der Waals radius. This model is named for Johannes Diderik van der Waals, a Dutch theoretical physicist and thermodynamicist. Several models have made improvements towards the van der Waals surfaces. The CPK models is developed by and named for Robert Corey, Linus Pauling, and Walter Koltun in the paper \cite{corey1953molecular}. In this model, atoms were represented by spheres whose radii were proportional to the van der Waals radii of the atoms, and whose center-to-center distances were proportional to the distances between the atomic nuclei in the same scale. The \textit{Lee-Richards surface model}, raised by B.Lee and F.M.Richards in the paper \cite{lee1971interpretation}, was given by the center of a radius $r_w$ sphere when this ball was rolling everywhere along the van der Waals surface of the molecule. While Lee-Richards surface might not be smooth, the \textit{Connolly's surface model}, introduced by M.L.Connolly in \cite{connolly1983analytical}, was constructed by replacing the center by the `front' of the same sphere.

On the other hand, it was found in biology that low-energy elements occur more frequently than others in the 3D structure of proteins. This interesting phenomenon was summarized as the \textit{Boltzmann like statistics for proteins}, i.e.,
\begin{equation}\label{eq:Boltzmann-statistics}
	\hbox{OBSERVED OCCURRENCE} \sim  \text{exp}(-\frac{\hbox{ENERGY}}{RT^{\ast}})
\end{equation}
where $T^{\ast}$ is the `conformational temperature' of protein statistics which is close to room temperature, and $R$ is the Boltzmann constant \cite{finkelstein1995protein,finkelstein1995boltzmann}. From this point of view, geometry objects could enter as representations of biomolecules. For example, the most commonly known theory in this side is the Ramachandran plot for proteins raised in \cite{RAMACHANDRAN196395}. In this theory, people focused on two backbone dihedral angles $\psi$ and $\phi$ of residues in proteins. The Ramachandran plot visualized energetically allowed regions for $\psi$ against $\phi$.

The existence of hydrogen bonds, especially the backbone hydrogen bond (BHB), is also important in the formation of the structure of proteins and the realization of the functions of proteins. Following the same philosophy of Equation \ref{eq:Boltzmann-statistics}, the \textit{Backbone Free Energy Estimator} was introduced by R.Penner in \cite{penner2020backbone,penner2022protein}. In this theory, he constructed the unit 3D frames on both sides of BHB with $x-y$ plane is the peptide plane and $z$ axis is the associated cross product. Hence, he could represent a BHB as an element on the Lie group $\SO_3(\Rds)$. The distribution of BHBs on $\SO_3(\Rds)$ could be computed from a well-selected data set.

The energy also appears in the studies and simulations of variations of protein conformations. This is a joint field of differential geometry, the shape analysis and biology. The shape space is somehow like the moduli space in mathematics. Since the energy usually appears as a sum of the potential of interactions that only depend on the Euclidean distance between pairs of atoms in the protein. Two conformations can only be distinguished up to rigid-body transformations. This observation gives the idea of the shape space.
\textit{The shape space} is a quotient manifold by identifying each protein conformation with the orbit space of configuration space of all protein conformations by its rigid-body transformations.
The dynamics of the protein conformation could be treated as the flow or the deformation on the shape space. There are several ways of constructing the shape spaces, for example using the backbone dihedral angles, using the point cloud and the Normal mode analysis (NMA) \cite{brooks1983harmonic,go1983dynamics,brooks1985normal}. The {Backbone Free Energy Estimator} \cite{penner2020backbone,penner2022protein} introduced above with the shape space $\SO_3(\Rds)$ is another example. Shape space is an important object in shape analysis and computer vision, we refer \cite{charon2022shape,laga2018survey,younes2010shapes} for more detailed introduction.

Finally, let us mention a recent work \cite{diepeveen2023riemannian} trying to bring the Riemannian geometry into the study of the protein conformations. In this paper, they constructed a smooth manifold from the quotient of the configuration space of non-planer atom coordinates by the group of affine translations. The different conformations of the proteins were represented as points on this manifold. An energy motivated and efficiently evaluated metric could be defined on this manifold such that making this manifold a Riemannian manifold. Then the dynamics of changing of protein conformations could be efficiently interpolated and extrapolated along non-linear geodesic curves.

\subsection{Discrete Curve Model}
Many topics in applied geometry like computer graphics and computer vision have close connection with discrete geometry, especially the discrete curves and discrete surfaces.
For example, the tasks like analysis of geometric data, its reconstruction, and further its manipulation and simulation.
Approximations of 3D-geometric notions play a crucial part in creating algorithms that can handle such tasks.
Similarly as the smooth realm where the curvature and torsion play important roles in parametrizing a smooth curve or surface, the estimation of curvatures and torsions of curves and surfaces is also important in applied geometry.
%A good understanding of estimating curvatures of curves often serves as an important first step for estimating curvatures of surfaces.

Recall that the curvature of a smooth curve $\gamma$ at a fixed point $p$ is defined as the inverse of the radius of the osculating circle of $\gamma$ at point $p$. It quantifies the extent of the bending of $\gamma$ at $p$. The torsion of the curve $\gamma$ is given by the 'percentage' between the normal bundle of the curve and the projection of the differential of the binormal bundle on the normal bundle. It measures the speed of a smooth curve moves away from it osculating plane. In the discrete case, we applied the ideas and methods from classical differential geometry instead of “simply” discretizing equations or using classical differential calculus. There are several versions of discretized curvature and torsion based on using different starting point of the classical differential geometry like from Frenet-frame \cite{carroll2014survey} and from geometric knot theory \cite{sullivan2008curves}.

In this paper, we mentioned two ways of applying properties of the discrete curve. The first, which we call it the truncated Hop distance between two discrete curves, is defined as a matrix composed of the differences of hop distances of vertices between two discrete curves. We will show below by example that treating a protein as a discrete curve and applying this simply defined and computed parameter give us an important property in the formation of amyloid fibril. We will show below by example that by treating a protein as a discrete curve, this simply defined and computed parameter give us an important property of the formation of amyloid fibril.

On the other hand, we introduced a version of discrete curvature and torsion from \cite{muller2021discrete} which is based on the osculating circle of a discrete curve. We will introduce an example that this version of torsion could detect the BHB between layers of amyloid fibril.

%In addition to analyzing the geometry of a single discrete curve, we introduces geometry and topological indicators, that is, the difference in hop distance and the Wasserstain distance, to compare the structure of a pair of discrete curves. Further, we showed examples of the application of these indicators in analyzing of the structure of amyloid fibrils by treating one layer of amyloid fibril as a discrete curve.

\subsection{Topological Model}

%Topological Data Analysis (TDA), a field , has shown a wide range of applications, such as computational chemistry \cite{townsend2020representation},  and .  

Topological Data Analysis (TDA) is a mathematical field focus on the `shapes' of the data. It provides a set of mathematical and computational techniques to analyze complex high-dimensional data, and identify key features of the data which may be difficult to be extracted with traditional methods. Recently, TDA has been witnessed applications in a variety of disciplines, such as computational chemistry \cite{townsend2020representation}, nano material \cite{xia2015persistent}, environmental science \cite{ofori2021application, ohanuba2023application}, natural language processing \cite{elyasi2019introduction}, image segmentation \cite{clough2020topological}, medical imaging \cite{singh2023topological, moraleda2019computational, pritchard2023persistent}, deep learning model \cite{rabbani2020topological, chen2022bscnets}, complex network \cite{gao2023hierarchical, wu2023metabolomic}, economy \cite{shultz2023applications}, biomolecular analysis \cite{dey2018protein, liu2022biomolecular}, the winning of D3R Grand Challenges \cite{nguyen2020review, nguyen2019mathematical} and the evolutionary mechanism of SARS-CoV-2 \cite{chen2020mutations}, etc. Compared with traditional dimensionality reduction methods, such as principal component analysis (PCA), TDA retains more effective information while reducing the dimensionality of data.

Up to now, various tools have been developed under the heading of TDA, such as (i) Mapper. Mapper first divides the original data into several clusters based on the preassigned filter functions. Then, two clusters are connected if they have common data points. The output of mapper is a combinatorial graph. Monica N. et al. analyzed the microarray gene expression data by adopting Mapper and disease-specific genomic analysis transformation, and successfully identified a unique subgroup of Estrogen Receptor-positive (ER+) breast cancers\cite{nicolau2011topology}. (ii) Persistent topological laplacians (PDL). PDL captures information about data based on the spectra of the laplacian operator. Specifically, the harmonic spectra relates to the Betti number of data, while the non-harmonic spectra provides extra geometry information \cite{gulen2023generalization, liu2023algebraic}. PDL-based approaches have been applied to analyze the structure of protein-protein interaction networks \cite{du2024multiscale}. (iii) Persistent homology (PH). PH is one of the most popularly used tool in TDA, which creates a multiscale family of simplicial complexes and describes the evolution of these complexes and their associated homology groups over scales. We will focus on the PH in the following materials.

Persistent homology has been widely applied in data science, such as crystalline material \cite{jiang2021topological}, chemical engineering \cite{smith2021topological}, especially computational biology \cite{liu2022biomolecular}. For example, the element-specific PH combined with deep learning winned the D3R Grand Challenges \cite{nguyen2020review, nguyen2019mathematical}, the element- and site-specific PH adopted for prediction the evolutionary mechanism of SARS-CoV-2 \cite{chen2020mutations} and hypergraph-based PH to predict protein-ligand binding affinity \cite{liu2021hypergraph}. Theories related to PH have been developed rapidly, including weighted PH \cite{ren2018weighted, bell2019weighted}, multiparameter PH \cite{guidolin2023morse, ripp2024multi, loiseaux2024framework}, etc.  Moreover, the open source packages, such as Gudhi \cite{maria2014gudhi}, Javaplex \cite{adams2014javaplex}, Ripser\cite{bauer2021ripser},  etc., have contributed to the wide range of applications in different areas of PH.
%In the following, we briefly introduce the definitions of PH, 
In this paper, we introduced basic definitions of the PH. More details can be found in \cite{zomorodian2004computing, edelsbrunner2008persistent}. Moreover, We will give an example of the application showing that these tools give better indicators to distinguish amyloid fibrils than the commonly used indicator RMSD.
%Moreover, we introduced a way of using topology namely the Vietoris-Rips (VR) complex, the persistent diagram (PD), the \emph{Wasserstein-$q$ distance} $W_q$ and \emph{Bottleneck distance} $W_\infty$. 
%compared the structure of amyloid fibrils by adopting TDA.

\subsection{Arrangement of the Paper}
The reminder of this paper is organized as follows. In Section \ref{sec:hop-dis},  we defined the truncated Hop distance to compare the structure of two discrete curves. Moreover, we explained how to treat one layer of amyloid fibril as a discrete curve. And we suggest that the effect in the formation of amyloid fibril is relatively mild based on the result of the truncated Hop distance. Preliminaries of the definitions of curvature and torsion on the discrete curves are presented in Section \ref{sec:cur-tor}. By computing the discrete curvature and torsion on single layers of amyloid fibril, we suggest that the anomalies reflect slight affects of the layer-layer hydrogen bonds. Finally in Section \ref{sec:Top-Methods}, we introduced the distances between persistent diagrams (PDs), and showed that how these distances work in quantifying the difference of the structure of amyloid fibrils.
The atomic models for crystal X-ray diffraction structures of human TTR and cryo-EM structures of ATTR wt and ATTRv were obtained from Protein Data Bank (PDB) \cite{berman2000protein}.

\section{Discrete Curve and Hop Distance}\label{sec:hop-dis}

A \textit{discrete curve} is a polygonal curve in $\Rds^2$ or $\Rds^3$ given by its vertices via a map $\gamma:\Zds\to\Rds^3$. In this section, we will introduce a direct parameter that not only could distinguish two different discrete curves, but also showed meaning in analyzing the difference of the structure of protein with well-folded form and amyloid form.
\subsection{Hop distance}
Let $\gamma$ and $\gamma'$ be two different discrete curves, and $\gamma_i$ and $\gamma'_i$ be the corresponding points of vertices.
Let $d(\gamma_i, \gamma_{i+k})$ be the Euclidean distance between the $i$-th and $(i+k)$-th vertices, also called the $k$-hop distance between $\gamma_i$ and $\gamma_{i+k}$.
\begin{Def}
	The \textit{ $N$-truncated hop distance}, where $N$ is a fixed positive integer, between the two discrete curve $\gamma$ and $\gamma'$ is given by the $N\times N$ matrix $D$ with
	\begin{equation}
		D_{ij} = |d(\gamma_i, \gamma_j) - d(\gamma_i', \gamma_j')|.
	\end{equation}
\end{Def}

%\subsection{Regard the peptide chain of protein as a discrete curve}

%They form a chain which is called the backbone.
%The geometry structure of the proteins plays quite important role in the functions of the proteins. The geometry invariants could be helpful in analysis the formation of such structures.
%
%There are several ways of treating a peptide chain to a discrete curve. In proteins and amino acids, the $\alpha$-carbon is the backbone carbon before the carbonyl carbon atom in the molecule. It is known that the groups hanging off the chain at the $\alpha$-carbons are what give amino acids their diversity. Also the $\alpha$-carbon of an amino acid is significant in protein folding. In the application in this section we will treat the peptide chain of a protein as a discrete curve with vertices being the $\alpha$-carbons.

\subsection{Applications to Analysis of the Structure of Amyloid fibrils}
To explore structural or functional properties of proteins in practice, we mostly consider the heavy atoms of residues, the backbone of peptide chain, or the set of $C_\alpha\text{s}$ \cite{bramer2020atom, wang2020topology} to simplify the structure of protein. This simplication reduce the redundant and irrelevant information, but also preserve the significant structural features of proteins, which can improve the efficiency of research. In this case, we may treat a peptide chain as a discrete curve to explore its geometry invariants. It could be helpful to extract the structural features of protein,  and further, to analyze the functions of protein. In this example, we focused on the analysis of the structure of point mutant V30M TTR with well-folded form and amyloid form, as shown in Fig.\ref{fig: discrete curve}.

\begin{figure}[ht]
	\centering
	\includegraphics[height=6cm, width=10cm]{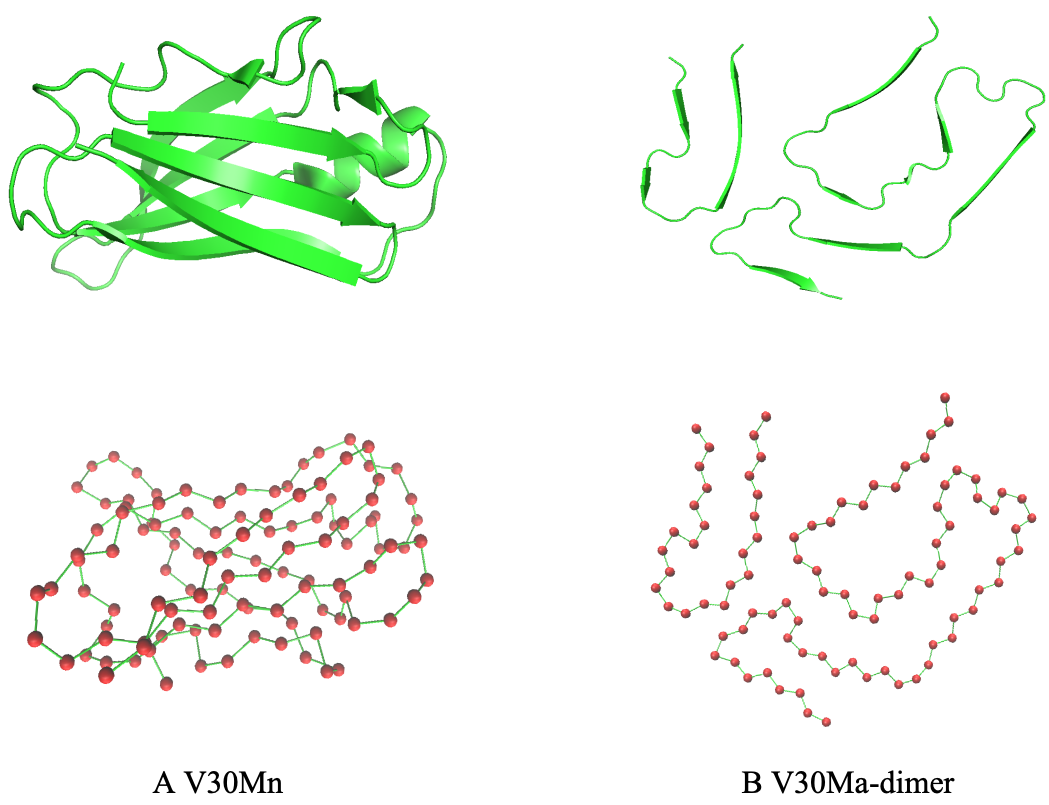}
	\caption{Convert peptide chains into discrete curves. (A) The top is the cartoon representation of chain A of TTR V30M mutant (PDB entry: V30Mn), and the bottom is its corresponding discrete curve. (B) The top is the cartoon representation of chain A of one amyloid form of V30Mn (V30Ma-dimer), and the bottom is its corresponding discrete curve. Red balls represent the $\text{C}_\alpha$ atoms. }
	\label{fig: discrete curve}
\end{figure}

It has been discovered that amyloid fibril serves as the key pathological entity in different neurodegenerative diseases, such as Alzheimer's disease and Parkinson's disease \cite{peng2020protein, jucker2013self}. However, the process by which amyloid fibril is formed is so far obscure, hindering further research into the causes of diseases. Here, we used the \textit{truncated hop distance} to explore the strength of effect in the formation of amyloid form, as shown in  Fig.\ref{fig: difference of hop distance}.

\begin{figure}[ht]
	\centering
	\includegraphics[height=5.5cm, width=6cm]{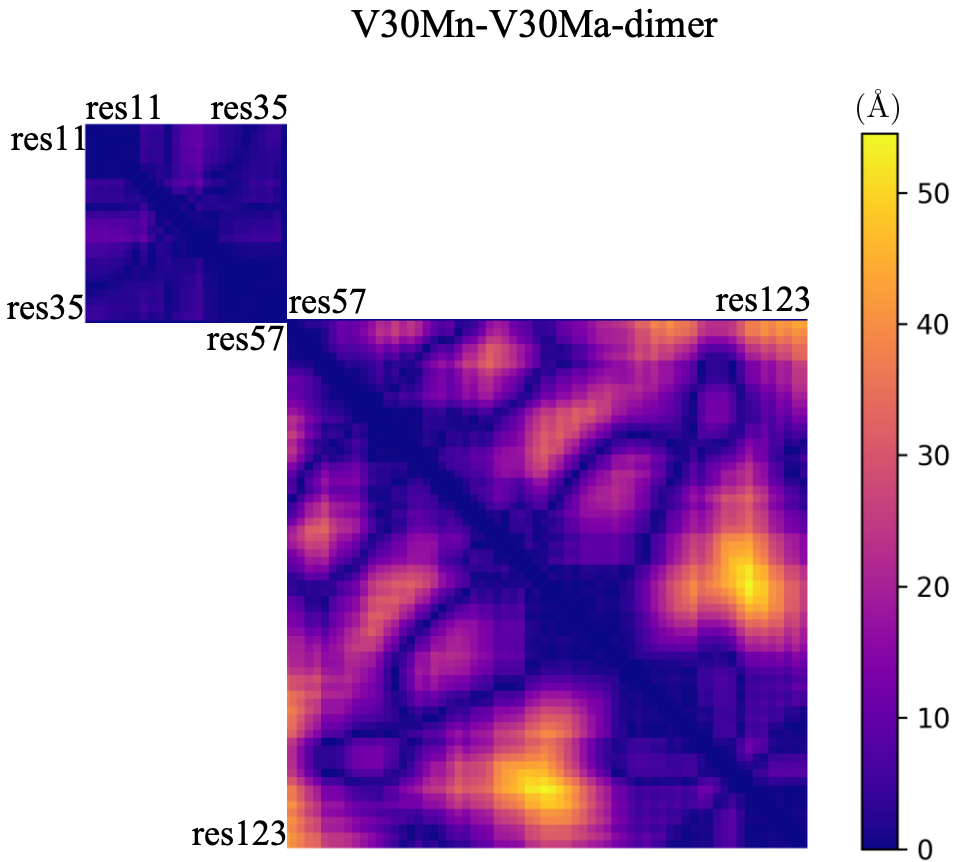}
	\caption{The \textit{truncated hop distance} between chain A of V30Mn and V30Ma-dimer. The entries represented by the rows and columns are the  corresponding residue numbers. Here, we only considered the \textit{truncated hop distance} of AAs of the same sequence fragment. }
	\label{fig: difference of hop distance}
\end{figure}

It is clear that the values near the diagonal are less than 1.5\AA, which illustrates that there could not be stretching and compressing in the process of the formation of amyloid fibril. We suggest that the effect is relatively mild in the formation from chain A of V30Mn to chain A of V30Ma-dimer. {Conversely, differences in hop distance greater than 25\AA are primarily found in three components\ref{fig: morethan25}A. The first and second components correspond to the hop distances between the $\beta_E$-strand (residues 67-73) and the $\beta_F$-strand (residues 88-97), and the hop distances between the EF-loop (residues 82-87) and the GH-loop (residues 113-114), respectively (Fig.\ref{fig: morethan25}B). This is due to the unfolding of the $\alpha$-helix (residues 74-82) and the dissociation of the inter-strand and inter-loop hydrogen bond networks in well-folded TTR. The third component corresponds to the hop distances between the DE-loop (residues 56-66) and the $\beta_H$-strand (residues 115-123). However, this component is absent in some I84S ATTRv structures, specifically I84Sa-p3 and  I84Sa-p4 (Fig.\ref{fig: morethan25}A). The DE-loop forms a gate of the polar channel in the ATTR fibril core and varies in different polymorphs. Most ATTRwt and ATTRv fibril cores prossess a complete shape of the gate (i.e., closed gate and open gate), while the gate is incomplete or absent in some I84S mutated variants (i.e., broken gate and absent gate, Fig.\ref{fig: morethan25}C\cite{nguyen2024structural}.} 

{Moreover, we identified local regions which are sensitive to the conformational transition of TTR by adopting the (truncated) $n$-hop distance. These regions contain the binding sites of Tafamidis, which is approved by the FDA in
2019 for the treatment of ATTR-caused disease. It suggests that the geometrical descriptor we defined gives insight into the drug design for stabilizers to inhibit
the transition to the amyloid fibril state.} More details will be shown in the biology paper \cite{xi2024Top}.
			
\begin{figure}[!ht]
	\centering
	\includegraphics[height=8cm, width=15cm]{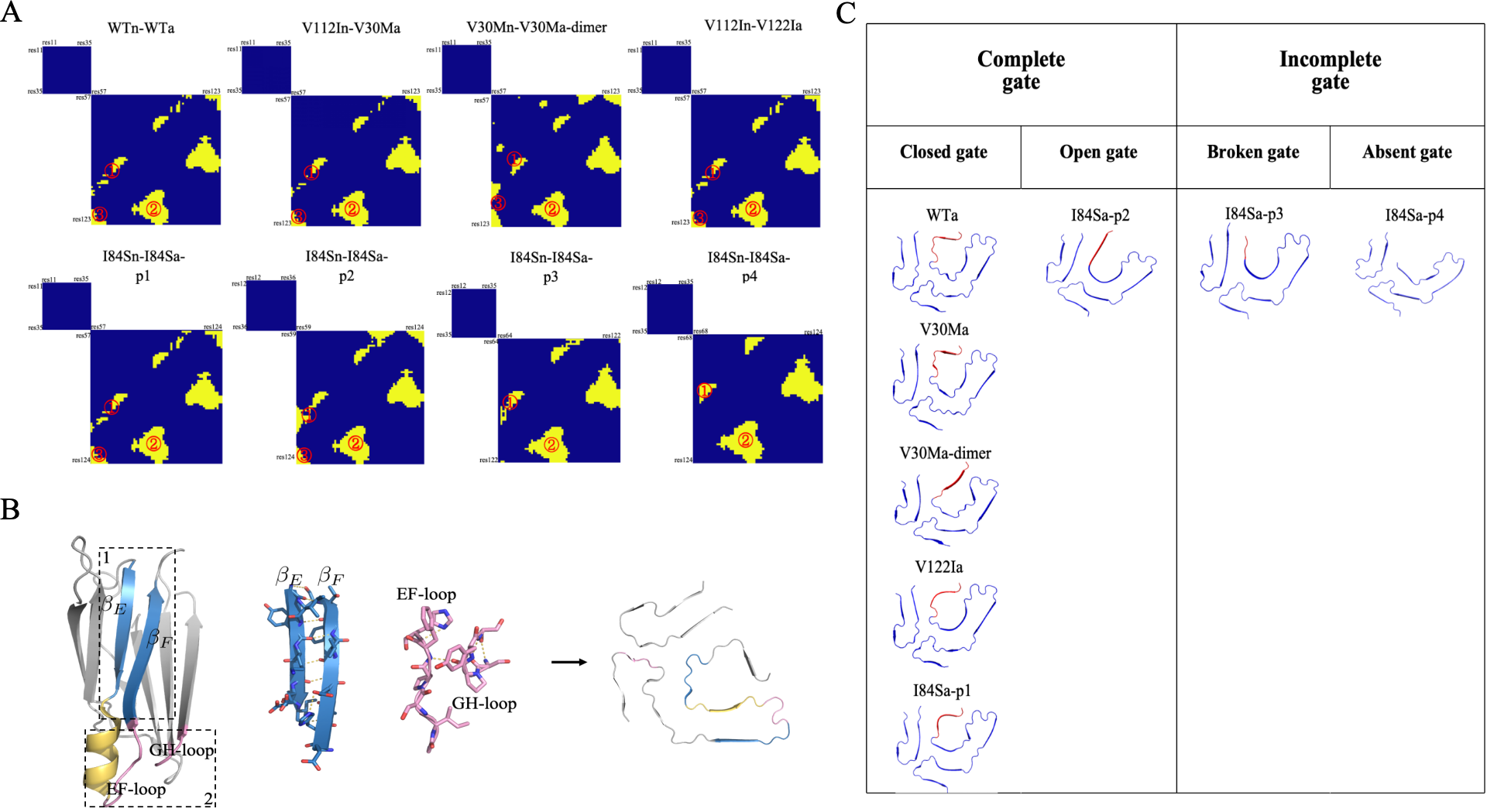}
	\caption{Structural analysis based on the binary matrices with 25\AA cutoff distance. (A) Binary matrices constructed by setting the cutoff distance to 25\AA. (B) The sequence fragments corresponding to the first and second brighter regions and their hydrogen bond interaction. (C) The shape of gate of chain A of ATTR.}
	\label{fig: morethan25}
\end{figure}

\section{Discrete Curvatures and Discrete Torsions}\label{sec:cur-tor}

The discrete curvature and discrete torsion based on the cross ratio and osculating circle was first introduced in \cite{muller2021discrete}, which is used in computer vision to estimate the differences between the background smooth curve and the discrete sample curve. The discrete curvature defined in this way is the discretization of sectional curvature on a smooth curve in a more fashion words. 
In the paper \cite{xi2024Top}, we introduced this important parameter to the analyzing the structure of proteins. We found that compared with the other {geometrical discriptors like backbone dihedral angles, the discrete curvatures and discrete torsions introduced in \cite{carroll2014survey}, discrete curvature resp. discrete torsion is a order two resp. order three parameter which is more sensitive to the weak interactions. Moreover it is still easy to compute although more complicated compared with the one introduced in \cite{carroll2014survey} since we need to solve a $3\times 3$ linear equation.}
In this section, we will give a detailed introduction on these parts.
%a way of  using intrinsic properties of the discrete curves namely the curvatures and torsions.

\subsection{Curvatures and Torsions for Smooth Curves}\label{subsec:smooth-cur-tor}

The curvature and the torsion are two important geometric invariants parametrizing the property of a smooth space curve. {We will first recall the corresponding constructions in the smooth case.}
Let $\gamma(s)\subset \Rds^3$ be a smooth space curve with $s$ the arc length parameter. Recall that the tangent field $T(s)$ is unit for all $s\in \Rds$.
The \textit{principal normal field} is given by $N(s):=\frac{T'(s)}{\parallel T'(s)\parallel}$ and the \textit{binormal field} by $B(s):=N(s)\times T(s)$ while both of them are unit vector fields. It is clear that the three unit vectors $T(s),\ N(s)$ and $B(s)$ are pairwise normal, hence they form a frame at each point.

The \textit{curvature} $\kappa$ is defined as $\kappa:=\parallel T'(s)\parallel$ parametrizing the bending degree of the curve. Geometrically, there exists a space circle passed $\gamma(s)$ which has $N(s)$ as radial vector and $\frac{1}{\kappa}$ as radius. This circle is called \textit{the osculating circle}, which has contact of order at least 2 with $\gamma$ at $\gamma(s)$.
Therefore, they have the common tangent plane which is spanned by $T(s)$ and $N(s)$ and we call it \textit{the osculating plane}.

Since $B(s)$ is an unit field, its differential $B'(s)$ should tangent to $B(s)$. Moreover, it is clear that the following equation holds.
\begin{equation}
    0=\frac{d}{ds}\lbrace B(s),T(s)\rbrace=\lbrace B'(s),T(s)\rbrace+\lbrace B(s),T'(s)\rbrace=\lbrace B'(s),T(s)\rbrace
\end{equation}
Hence, $B'(s)$ should parallel to $N(s)$, the \textit{torsion} of the curve $\gamma$ at $\gamma(s)$ as the number $\tau$ satisfying the property that $B'(s)=-\tau N(s)$.
From all these constructions, the torsion $\tau$ parametrized the speed of a curve moving away from its osculating plane.

\subsection{Basics in quaternions}\label{sec:quaternion}
Let $\Hds$ be the ring of quaternionics which could be treated as a skew field whose elements can be identified with $\Rds\times\Rds^3$.
There exist an isomorphism of Euclidean 3-space $\Rds^3$ with the imaginary parts of $\Hds$. In this paper, we will write $\Hds$ in the following way
\begin{equation}
    \Hds:=\{[r,v]:r\in\Rds,v\in \Rds^3\}
\end{equation}
The first part $r=\Rea q$ resp. the second part $v=\Img q$ of a quaternion $q=[r,v]$ is called the \textit{real part of $q$} resp. the \textit{imaginary part of $q$}. The set of all pure imaginary quaternions is denoted as $\Img \Hds$, i.e., $\Img \Hds:=\{[0,v]:v\in\Rds^3\}$.
Some operators on $\Hds$ under this form are listed in the Table \ref{table: hamquaop}.

\begin{table}
    \begin{tabular}{c c }
        \hline
        Addition       & $[r,v]+[s,w]      = [r+s,v+w] $                                              \\
        Multiplication & $[r,v]\cdot[s,w]  =  [rs-<v,w>,rw+sv+v\times w]$                             \\
        Conjugation    & $\overline{q}     =  [r,-v]$                                                 \\
        Norm           & $|q|              =  \sqrt{q\cdot\bar{q}}=\sqrt{r^2+\parallel v\parallel^2}$ \\
        Inverse        & $q^{-1}              =  \frac{\bar{q}}{|q|^2}$                                 \\
        Cross Ratio    & $\cro(a,b,c,d):=(a-b)(b-c)^{-1}(c-d)(d-a)^{-1}$                              \\
        \hline
    \end{tabular}
    \caption{Some operators on $\Hds$}
    \label{table: hamquaop}
\end{table}

In this paper, the polar representation of a quaternion is quite useful, for a quaternion $q=[r,v]$ its polar representation is defined as $q=|q|[\cos\phi,v \sin\phi]$ with $\parallel v\parallel=1$ and $\phi\in [0,\pi]$. In this case, the square root of $q$ is defined as $\sqrt{q}:=\sqrt{|q|}[\cos\frac{\phi}{2},v\sin\frac{\phi}{2}]$. Note that such defined square root is not unique if $q\in \Rds_{\le 0}$, however, we will not meet this case in our application.

Recall that a \textit{M\"{o}bius transformation} is a homeomorphism of $\Rds^n\cup\{\infty\}$ which is a finite composition of inversions in spheres and reflections in hyperplanes. In other words, any M\"{o}bius transformation $\sigma$ can be written as the form
\begin{equation}
    \sigma(x)=x+\frac{\alpha A(x-a)}{\parallel x-a\parallel^{\epsilon}},\ x\in \Rds^n\cup\{\infty\}
\end{equation}
where $\alpha\in\Rds$, $a,b\in \Rds^n$, $A$ is an orthogonal matrix and $\epsilon$ equals 0 or 2.
It is clear to check that the cross-ratio above is invariant under M\"{o}bius transformation i.e. the following equation holds for all M\"{o}bius transformation $\sigma$
\begin{equation}
    \cro(\sigma(a),\sigma(b),\sigma(c),\sigma(d))=\cro(a,b,c,d)
\end{equation}
Moreover, we have the following Lemma for the properties of the cross ratio.
\begin{lem}\label{lem:cro-ratio}
    Let $a,b,c,d,d'\in \Hds$ be four quaternions, then we have
    \begin{enumerate}
        \item If $d'\in \Hds$, $a\neq c$ and $\cro(a,b,c,d)=\cro(a,b,c,d')$, then we have $d=d'$.
        \item If $a,b,c,d\in \Img \Hds\cong \Rds^3$, then $\cro(a,b,c,d)\in \Rea \Hds$ if and only if $a,b,c,d$ are concyclic.
    \end{enumerate}
\end{lem}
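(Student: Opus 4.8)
The plan for (1) is to regard $\cro(a,b,c,\cdot)$ as a map in its last argument with $a,b,c$ held fixed and to show injectivity by explicitly inverting it. Assuming the cross ratios are defined and $a,b,c$ are pairwise distinct (the prefactor $(a-b)(b-c)^{-1}$ must be invertible — note the statement fails when $a=b$, since then the cross ratio vanishes identically in $d$), I would left-multiply the hypothesis $\cro(a,b,c,d)=\cro(a,b,c,d')$ by $(b-c)(a-b)^{-1}$ to cancel the common factor, reducing to $(c-d)(d-a)^{-1}=(c-d')(d'-a)^{-1}=:w$. I then solve $w=(c-d)(d-a)^{-1}$ for $d$: clearing the inverse gives $w(d-a)=c-d$, hence $(w+1)d=c+wa$, so $d=(w+1)^{-1}(c+wa)$ is uniquely determined by $w$. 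The identical formula holds for $d'$ with the same $w$, giving $d=d'$. The only delicate point is invertibility of $w+1$: if $w=-1$ then $c-d=a-d$, forcing $a=c$, and the hypothesis $a\neq c$ is precisely what excludes this.

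For (2) the strategy is to exploit the Möbius invariance of the cross ratio recorded above, together with the standard fact that a Möbius transformation of $\Rds^3\cup\{\infty\}$ carries the unique circle-or-line through three distinct points to a straight line (invert at a point of that circle). Given distinct $a,b,c$, choose such a $\sigma$ sending their circle/line $C$ to a line $L$; since $\sigma$ is a bijection with $\sigma(C)=L$, the point $d$ is concyclic with $a,b,c$ (i.e.\ $d\in C$) if and only if $\sigma(d)\in L$. By invariance, $\cro(a,b,c,d)\in\Rea\Hds$ iff $\cro(\sigma a,\sigma b,\sigma c,\sigma d)\in\Rea\Hds$. Hence both sides of the claimed equivalence are unchanged by $\sigma$, and the problem reduces to the case where the first three points are collinear.

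It then remains to treat the collinear case: if $A,B,C$ lie on a line $L$ with unit direction $u\in\Img\Hds$, then $\cro(A,B,C,D)\in\Rea\Hds$ iff $D\in L$. Here I would use the elementary criterion that for nonzero pure-imaginary quaternions $X,Y$ one has $XY^{-1}\in\Rea\Hds$ iff $X$ and $Y$ are parallel over $\Rds$: indeed $XY^{-1}=-XY/|Y|^2$, and since the imaginary part of $XY$ is the cross product $X\times Y$, the product is real exactly when $X\times Y=0$. Translating $L$ to pass through the origin (the cross ratio sees only differences), write $A=\alpha u$, $B=\beta u$, $C=\gamma u$ with $\alpha,\beta,\gamma\in\Rds$; the factor $(A-B)(B-C)^{-1}$ collapses to a real scalar, so $\cro(A,B,C,D)$ is real iff $(\gamma u-D)(D-\alpha u)^{-1}$ is real, i.e.\ iff $\gamma u-D=\lambda(D-\alpha u)$ for some $\lambda\in\Rds$. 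Rearranging gives $(\gamma+\lambda\alpha)u=(\lambda+1)D$, which places $D$ on $L$ whenever $\lambda\neq-1$; and $\lambda=-1$ would force $\alpha=\gamma$, i.e.\ $A=C$, against distinctness.

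The main obstacle throughout is non-commutativity: every cancellation must respect the left/right placement of factors, and the clean criterion ``$XY^{-1}$ real iff $X\parallel Y$'' is special to pure-imaginary arguments, so I must verify that the relevant differences stay in $\Img\Hds$ after each step. A secondary subtlety is the reliance on the stated Möbius invariance; in fact only invariance up to conjugation is needed, since conjugation fixes $\Rea\Hds$ pointwise, so the realness argument is robust even if the quaternionic cross ratio is merely conjugation-invariant under the full Möbius group. Finally, I would state explicitly the distinctness hypotheses ($a,b,c$ pairwise distinct and $d\neq a$) required both for the inverses to exist and for ``the circle through $a,b,c$'' to be well defined.
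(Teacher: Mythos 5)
Your proposal is correct, but it is considerably more than the paper provides: the paper's ``proof'' of this lemma consists of declaring part (1) ``clear to check'' and deferring part (2) entirely to \cite{bobenko1996discrete}. For (1) your explicit inversion $d=(w+1)^{-1}(c+wa)$ with $w:=(c-d)(d-a)^{-1}$ is exactly the computation the paper leaves implicit, and your observation that $w=-1$ forces $a=c$ correctly identifies why the hypothesis $a\neq c$ is the right one. For (2) you give a self-contained argument where the paper gives none: reduce by M\"{o}bius invariance to the case of three collinear points, then use the criterion that for nonzero $X,Y\in\Img\Hds$ one has $XY^{-1}\in\Rea\Hds$ iff $X\varparallel Y$ (since $\Img(XY)=X\times Y$ up to sign). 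This is essentially the mechanism behind the cited result, and your remark that only conjugation-invariance of the quaternionic cross ratio is needed (because conjugation fixes $\Rea\Hds$) is a genuine improvement in rigor, as the strict invariance asserted in the paper is delicate in the non-commutative setting. What your route buys is a proof readable without the external reference; what the paper's route buys is brevity and alignment with the source it is quoting.

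Two small points worth recording. First, you are right that the lemma as stated is missing hypotheses: part (1) needs $a\neq b$, $b\neq c$ and $d,d'\neq a$ for the cross ratios to be defined and nonzero (if $a=b$ the cross ratio vanishes identically in $d$ and injectivity fails), and part (2) needs $a,b,c$ pairwise distinct for ``the circle through $a,b,c$'' to make sense; these should be added to the statement rather than treated as part of the proof. Second, in the collinear case you should note that ``concyclic'' must be read in the M\"{o}bius sense (a line is a circle through $\infty$), and that the degenerate value $D=C$ gives cross ratio $0\in\Rea\Hds$ with $D$ trivially on the line, so the parallelism criterion is only invoked for nonzero differences. Neither point affects the validity of your argument.
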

\begin{proof}
    \
    \begin{enumerate}
        \item The first is clear to check.
        \item The second is proved in \cite{bobenko1996discrete}.
    \end{enumerate}
\end{proof}
\begin{lem}\label{lem:imcro}
    Let $a, b, c, d \in \Img\Hds$ be four points not lying on a common circle. Then, the imaginary part of the cross-ratio is the normal of the circumsphere (or plane) at $a$, i.e., for a proper circumsphere with center $m$, we have the imaginary part of $\Img \cro(a, b, c, d)$ is parallel to the vector $(m-a)$ denoted as $\Img \cro(a, b, c, d)\varparallel(m-a)$.
\end{lem}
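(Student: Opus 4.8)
The plan is to reduce to a normalized configuration by a translation and then compute the imaginary part of the cross-ratio essentially explicitly, separating the generic non-coplanar case from the degenerate planar one. First I would translate all four points by $-a$: since a translation is a M\"{o}bius transformation the cross-ratio is unchanged, while the circumsphere centre becomes $m-a$. So it suffices to treat $a=0$ and to show $\Img\cro(0,b,c,d)\varparallel m$, where $m$ is now the centre of the sphere through $0,b,c,d$. Using that every $x\in\Img\Hds$ satisfies $x^{-1}=-x/|x|^2$, the definition collapses to
\begin{equation}
  \cro(0,b,c,d)=\frac{-1}{|b-c|^2\,|d|^2}\;b\,(b-c)\,(c-d)\,d,
\end{equation}
a real scalar times $W:=b(b-c)(c-d)d$. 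Hence the whole statement reduces to locating $\Img W$, the imaginary part of a product of four purely imaginary quaternions.

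In the non-coplanar case the four points lie on a genuine sphere with finite centre $m$, and lying on this sphere through the origin is the single scalar relation $|x|^2=2\langle x,m\rangle$ for $x\in\{b,c,d\}$. Multiplying out $W=[b(b-c)][(c-d)d]$ with $pq=-\langle p,q\rangle+p\times q$ for imaginary $p,q$, I would obtain
\begin{equation}
  \Img W=\alpha\,(c\times d)-\beta\,(b\times c)-(b\times c)\times(c\times d),
\end{equation}
where $\alpha=\langle b,c\rangle-|b|^2$ and $\beta=|d|^2-\langle c,d\rangle$. The scalar triple product gives $(b\times c)\times(c\times d)=\Delta\,c$ with $\Delta=\det(b,c,d)$. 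The sphere relations then rewrite $\alpha=\langle b,\xi\rangle$ and $\beta=-\langle d,\xi\rangle$ for $\xi:=c-2m$, and simultaneously give the orthogonality $\langle\xi,c\rangle=|c|^2-2\langle c,m\rangle=0$. Substituting these into the expansion identity $\Delta\,\xi=\langle\xi,b\rangle(c\times d)+\langle\xi,c\rangle(d\times b)+\langle\xi,d\rangle(b\times c)$ makes the first two terms of $\Img W$ collapse to $\Delta\,\xi$, whence $\Img W=\Delta(\xi-c)=-2\Delta\,m$. As $\Delta\neq 0$ off the coplanar locus, this is a nonzero real multiple of $m$, i.e. $\Img\cro(0,b,c,d)\varparallel m$.

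It remains to treat the planar case, where no finite circumsphere exists and the ``circumsphere'' is the plane itself. After translation the common plane is a linear $2$-plane $\Pi=\hat n^{\perp}$, and each factor $b,\,d,\,b-c,\,c-d$ (and their inverses, which preserve $\Pi$) lies in $\Pi$. For $u,v\in\Pi$ one has $uv=-\langle u,v\rangle+u\times v$ with $u\times v\varparallel\hat n$, so a product of two vectors of $\Pi$ lies in the commutative subalgebra $\Rds\,1\oplus\Rds\,\hat n\cong\Cds$; grouping the four factors of $\cro$ into two such products forces $\cro(0,b,c,d)\in\Rds\,1\oplus\Rds\,\hat n$, and therefore $\Img\cro\varparallel\hat n$, the plane normal.

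The step I expect to be the real obstacle is making $\Img W$ collapse to a scalar multiple of $m$: the naive expansion yields three vector terms $\alpha(c\times d)$, $\beta(b\times c)$ and $\Delta\,c$ pointing in unrelated directions, and it is only after recasting $\alpha,\beta$ through $\xi=c-2m$ and combining the triple-product expansion with the single relation $\langle\xi,c\rangle=0$ from the sphere equation that everything cancels. Keeping the non-commutative bookkeeping correct, and cleanly separating off the degenerate coplanar case (where $m$ escapes to infinity and must be replaced by the plane normal $\hat n$), are the two points demanding care.
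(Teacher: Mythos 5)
Your proof is correct, and it takes a genuinely different route from the paper's: the paper does not prove Lemma \ref{lem:imcro} at all, it simply cites Lemma 10 of \cite{muller2021discrete}, whereas you give a self-contained quaternionic computation. I checked the key steps. The normalization $a=0$ is legitimate (translations are M\"{o}bius maps and $\cro$ is M\"{o}bius-invariant), and the reduction $\cro(0,b,c,d)=-W/(|b-c|^2|d|^2)$ with $W=b(b-c)(c-d)d$ follows from $x^{-1}=-x/|x|^2$ for $x\in\Img\Hds$. Writing $b(b-c)=[\alpha,-b\times c]$ and $(c-d)d=[\beta,\,c\times d]$ with your $\alpha,\beta$, the multiplication rule of Table \ref{table: hamquaop} gives exactly your expression for $\Img W$; the identity $(b\times c)\times(c\times d)=\det(b,c,d)\,c$, the dual-basis expansion $\Delta\xi=\langle\xi,b\rangle(c\times d)+\langle\xi,c\rangle(d\times b)+\langle\xi,d\rangle(b\times c)$, and the sphere relation $\langle\xi,c\rangle=0$ for $\xi=c-2m$ do collapse everything to $\Img W=-2\Delta m$, a nonzero multiple of $m=m-a$ precisely when $\Delta\neq 0$, i.e.\ off the coplanar locus. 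The coplanar case is also handled correctly: all factors of $\cro$ lie in the linear plane $\Pi$, each pairwise product lands in the commutative subalgebra $\Rds\oplus\Rds\hat n\cong\Cds$, so $\Img\cro\varparallel\hat n$, consistent with the paper's remark that $m$ is then the point at infinity and the normal is the plane normal. What your approach buys is an explicit formula, $\Img\cro(a,b,c,d)=\tfrac{2\det(b-a,\,c-a,\,d-a)}{|b-c|^2|d-a|^2}\,(m-a)$, rather than mere parallelism, at the cost of redoing a computation the cited reference performs once and for all. The only cosmetic caveat is that $b\neq c$ and $d\neq a$ must hold for $\cro$ to be defined, which both you and the paper leave implicit.
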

\begin{rmk}
    If the four points $a, b, c, d$ are coplaner, then $m$ is the point of $\infty$. The vector $\Img \cro(a, b, c, d)$ is orthogonal to the common plane of $a, b, c, d$.
\end{rmk}
\begin{proof}
    This is Lemma 10 of \cite{muller2021discrete}.
\end{proof}

Let us define the `diagonal' points $f(a,b,c,d)$ as
\begin{equation}
    f(a,b,c,d):=((b-a)(c-a)^{-1}\sqrt{\cro(c,a,b,d)}+1)^{-1}((b-a)(c-a)^{-1}\sqrt{\cro(c,a,b,d)}c+b)
\end{equation}
We have the following lemmas
\begin{lem}\label{lem:quaf}
    The `diagonal' point $f(a,b,c,d)$ fullfills the equation that
    \begin{equation}
        \cro(c,a,b,f(a,b,c,d))=-\sqrt{\cro(c,a,b,d)}
    \end{equation}
    Moreover if we assume $a,b,c,d\in \Img(\Hds)\cong \Rds^3$ then the 'diagonal' point satisfies $f(a,b,c,d)\in \Img(\Hds)$. Even more $f(a,b,c,d)$ lies on the circumsphere of $a,b,c,d$.
\end{lem}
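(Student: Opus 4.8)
The statement splits into a purely algebraic identity (valid for any quaternions) and a geometric refinement (needing $a,b,c,d\in\Img\Hds$), and I would keep these two parts logically separate.

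First I would treat the cross-ratio identity, which is pure quaternion bookkeeping. Abbreviate $u:=(b-a)(c-a)^{-1}$ and $w:=\sqrt{\cro(c,a,b,d)}$, so that by definition $f=(uw+1)^{-1}(uwc+b)$, i.e. $(uw+1)f=uwc+b$. Rearranging gives $f-b=uw(c-f)$, hence $b-f=uw(f-c)$ and therefore $(b-f)(f-c)^{-1}=uw$ (legitimate whenever $f\neq c$, the only case we need). Feeding this into
\begin{equation}
  \cro(c,a,b,f)=(c-a)(a-b)^{-1}(b-f)(f-c)^{-1}=(c-a)(a-b)^{-1}(b-a)(c-a)^{-1}\,w
\end{equation}
and using that the scalar $-1$ commutes, so $(a-b)^{-1}(b-a)=-1$, the outer factors telescope to $(c-a)(c-a)^{-1}=1$ and the whole expression collapses to $-w=-\sqrt{\cro(c,a,b,d)}$. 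The only real care here is keeping left and right multiplication straight because $\Hds$ is noncommutative; once the collapsing step $(b-f)(f-c)^{-1}=uw$ is spotted, there is no obstacle.

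For the second part I would argue through the Möbius map $g(x):=\cro(c,a,b,x)=(c-a)(a-b)^{-1}(b-x)(x-c)^{-1}$. Writing $(b-x)(x-c)^{-1}=(b-c)(x-c)^{-1}-1$ exhibits $g$ as a quaternionic fractional-linear transformation of $\Hds\cup\{\infty\}$, hence a Möbius transformation carrying round spheres of every dimension to round spheres or flats. A short substitution gives $g(a)=1$, $g(b)=0$, $g(c)=\infty$, and $g(d)=\cro(c,a,b,d)=:q=w^2$. Assuming $a,b,c,d$ are not concyclic (so their circumsphere $S\subset\Img\Hds\cong\Rds^3$ is a genuine round $2$-sphere, matching the hypothesis of Lemma~\ref{lem:imcro}), the image $g(S)$ is a round $2$-surface through $0,1,q,\infty$; since it passes through $\infty$ it is the affine $2$-plane through $0,1,q$. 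The key observation is that this plane is exactly the commutative subfield $\Cds_{\hat{n}}:=\Rds\oplus\Rds I$ with $I:=\Img q/\lVert\Img q\rVert$ and $I^2=-1$, because $q=\Rea q+\lVert\Img q\rVert\,I$ lies in it. Now $g(f)=-w=-\sqrt{q}=-\sqrt{|q|}\,(\cos\tfrac{\phi}{2}+\sin\tfrac{\phi}{2}\,I)$ also lies in $\Cds_{\hat{n}}=g(S)$ by the polar formula for the square root, so $f=g^{-1}(g(f))\in S$. Since $S\subset\Img\Hds$, this simultaneously yields $f\in\Img(\Hds)$ and that $f$ lies on the circumsphere of $a,b,c,d$.

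The main obstacle is precisely this second claim: establishing $f\in\Img\Hds$ by a direct conjugation computation is unpleasant because $u=(b-a)(c-a)^{-1}$ is not itself imaginary, so the slick resolution is the Möbius viewpoint together with the remark that $\sqrt{q}$ and $q$ live in the same complex line $\Rds[\sqrt{q}]$, which is exactly the image of the circumsphere under $g$. As an alternative I would note that once $f\in\Img\Hds$ is known, one can instead invoke Lemma~\ref{lem:imcro}: the polar form of $-\sqrt{q}$ shows $\Img\cro(c,a,b,f)\varparallel\Img q=\Img\cro(c,a,b,d)$, so the two circumspheres (of $a,b,c,f$ and of $a,b,c,d$) have parallel normals at $c$; as spheres in the pencil through the circle on $a,b,c$ are distinguished by their normal direction at $c$, they must coincide, placing $f$ on the circumsphere of $a,b,c,d$.
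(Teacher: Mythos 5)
Your proof is correct, and it takes a genuinely different route from the paper, which offers no argument for this lemma at all: it simply defers to Lemma 11 and Corollary 4 of \cite{muller2021discrete} (the citation is tucked into the proof of the following corollary on M\"obius invariance). Your first half --- rewriting the defining relation as $(b-f)(f-c)^{-1}=uw$ and telescoping, with the only care being noncommutativity --- is the expected elementary computation. The second half is where your write-up adds real content: instead of a coordinate verification you use that $g(x)=\cro(c,a,b,x)$ is a M\"obius transformation of $\Hds\cup\{\infty\}$ sending $b,a,c,d$ to $0,1,\infty,q$, so it carries the circumsphere of $a,b,c,d$ onto the affine plane $\Rds\oplus\Rds I$ with $I=\Img q/\parallel\Img q\parallel$, and the polar form of the square root puts $g(f)=-\sqrt q$ in that same complex line; pulling back yields $f\in\Img\Hds$ and $f$ on the circumsphere simultaneously. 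This is precisely the ``map everything to a complex plane by a M\"obius transformation'' device the paper itself invokes for Proposition \ref{prop:four-inserting}, so your argument is stylistically consistent with the surrounding text while being self-contained. Two degenerate cases deserve a sentence each to make it airtight: you implicitly assume $q\notin\Rds$ (by Lemma \ref{lem:cro-ratio}(2) this is exactly the non-concyclic case; if $a,b,c,d$ are concyclic the same argument with the circle through $a,b,c$ in place of the sphere still works when $q>0$, and $q\le 0$ is excluded by the paper's convention on square roots), and you should note that $uw+1\neq 0$, i.e.\ $-w\neq g(\infty)$, so that $f$ is a finite point. Neither affects the substance.
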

\begin{rmk}
    If $a,b,c,d$ are four points in $\Rds^3\cong \Img \Hds$ that form a parallelogram with $ab$ parallel to $cd$, the inserting point $f(a,b,c,d)$ is the intersection of the two diagonals $ac$ and $bd$. This is the reason that it is called a `diagonal' point.
\end{rmk}
\begin{cor}
    The formula $f$ is invariant under M\"{o}bius transformation.
\end{cor}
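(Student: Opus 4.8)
The plan is to sidestep the explicit algebra in the definition of $f$ by characterizing $f(a,b,c,d)$ through the cross-ratio identity it satisfies and then appealing to uniqueness. By Lemma \ref{lem:quaf}, the point $x=f(a,b,c,d)$ solves
\begin{equation}
    \cro(c,a,b,x)=-\sqrt{\cro(c,a,b,d)}.
\end{equation}
Reading $\cro(c,a,b,\cdot)$ as a function of its last slot, its first and third entries $c$ and $b$ are distinct (a standing assumption on the four input points), so Lemma \ref{lem:cro-ratio}(1) tells us this solution is unique. Hence $f(a,b,c,d)$ is the \emph{only} quaternion whose cross-ratio with $c,a,b$ equals $-\sqrt{\cro(c,a,b,d)}$, and I will exploit exactly this rigidity.

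First I would fix a M\"{o}bius transformation $\sigma$ and verify that $\sigma(f(a,b,c,d))$ satisfies the defining identity for $f$ built from the image points $\sigma(a),\sigma(b),\sigma(c),\sigma(d)$. Using the M\"{o}bius invariance of the cross-ratio recorded before the lemmas, together with the identity above, I compute
\begin{equation}
    \cro(\sigma(c),\sigma(a),\sigma(b),\sigma(f(a,b,c,d)))=\cro(c,a,b,f(a,b,c,d))=-\sqrt{\cro(c,a,b,d)}.
\end{equation}
Invariance also gives $\cro(\sigma(c),\sigma(a),\sigma(b),\sigma(d))=\cro(c,a,b,d)$; since these are literally the same quaternion, their square roots coincide and the right-hand side above equals $-\sqrt{\cro(\sigma(c),\sigma(a),\sigma(b),\sigma(d))}$.

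Next I would apply Lemma \ref{lem:quaf} directly to the image points: the quaternion $f(\sigma(a),\sigma(b),\sigma(c),\sigma(d))$ satisfies the very same equation $\cro(\sigma(c),\sigma(a),\sigma(b),\cdot)=-\sqrt{\cro(\sigma(c),\sigma(a),\sigma(b),\sigma(d))}$. Because $\sigma$ is injective we have $\sigma(c)\neq\sigma(b)$, so Lemma \ref{lem:cro-ratio}(1) again guarantees uniqueness of the solution. Comparing the two displays forces $\sigma(f(a,b,c,d))=f(\sigma(a),\sigma(b),\sigma(c),\sigma(d))$, which is precisely the asserted invariance.

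The point requiring the most care is the square root. The polar-form square root is ambiguous only on $\Rds_{\le 0}$, so strictly I must check that $\cro(c,a,b,d)$ avoids that locus before and after applying $\sigma$; the genuine (sign-preserving) invariance of the cross-ratio makes the two arguments identical, and the remark after the definition of the square root already excludes this degeneracy in the intended application. A minor additional bookkeeping item is the behaviour when $\sigma$ sends one of the four points to $\infty$, which is handled by the usual conventions for cross-ratios at infinity; the uniqueness argument itself is unaffected.
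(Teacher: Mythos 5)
Your argument is correct and is essentially the paper's own proof, just written out in full: the paper likewise derives the invariance of $f$ by combining Lemma \ref{lem:quaf} (the defining cross-ratio identity for $f$) with Lemma \ref{lem:cro-ratio} (uniqueness of the fourth point and M\"{o}bius invariance of the cross-ratio). Your explicit handling of the uniqueness hypothesis $c\neq b$ and of the square-root branch is a welcome elaboration of details the paper leaves implicit.
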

\begin{proof}
    The Lemma \ref{lem:quaf} is the Lemma 11 and Corollary 4 in \cite{muller2021discrete}. Then from the Lemma \ref{lem:cro-ratio} and Lemma \ref{lem:quaf} the formula $f$ is M\"{o}bius transformation invariant.
\end{proof}

Now we will introduce one of the most important Proposition and also the basis of the further construction.
\begin{prop}\label{prop:four-inserting}
    Let $a,b,c,d\in \Img\Hds\cong \Rds^3$ be four pairwise different points and consider the four inserting points given by $f$ and cyclic permutations
    \begin{eqnarray}
        A=f(d,a,b,c)&\ & B=f(a,b,c,d)\\
        C=f(b,c,d,a)&\ & D=f(c,d,a,b)
    \end{eqnarray}
    Then the four points $A,B,C,D$ are concyclic with
    \begin{equation}\label{eq:crominus1}
        \cro(A,B,C,D)=-1
    \end{equation}
\end{prop}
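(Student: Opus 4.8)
The plan is to use the conformal invariance of both $f$ and $\cro$ established in the Corollary preceding the statement in order to reduce the assertion to a commutative, planar computation, after which the concyclicity is read off for free from the computed value. Concretely, the four pairwise different points $a,b,c,d\in\Img\Hds\cong\Rds^3$ lie on a common circumsphere (or, if they are coplanar, on a common plane). Since $f$ and $\cro$ are M\"obius invariant, I may apply a M\"obius transformation carrying this circumsphere to a plane, choosing the centre of inversion on the sphere but away from $a,b,c,d$ so that all four images stay finite. Both operations moreover depend only on the differences of their arguments and return an affine combination of them, so they are equivariant under the translation bringing that plane to a plane $P$ through the origin; hence I may assume $a,b,c,d\in P$. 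If $n$ denotes a unit normal of $P$, then products of in-plane vectors lie in $\Rea\Hds\oplus\Rds\,n$, and as $n^2=-1$ this subalgebra is a copy of $\Cds$ inside $\Hds$. All the quaternionic operations defining $A,B,C,D$ and finally $\cro(A,B,C,D)$ therefore take place inside this $\Cds$, where $\cro$ becomes the classical complex cross ratio and $f(a,b,c,d)=(X+1)^{-1}(Xc+b)$ with $X=(b-a)(c-a)^{-1}\sqrt{\cro(c,a,b,d)}$ becomes an ordinary complex M\"obius combination.

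Working now in $\Cds$, I would use the characterisation of the inserting points coming from Lemma \ref{lem:quaf}: $B=f(a,b,c,d)$ is the unique point with $\cro(c,a,b,B)=-\sqrt{\cro(c,a,b,d)}$, and $A,C,D$ satisfy the cyclic analogues, uniqueness being Lemma \ref{lem:cro-ratio}(1). Since the complex M\"obius group is sharply $3$-transitive, I may further normalise three of the four points to convenient finite values, leaving a single free parameter, namely the cross ratio of $a,b,c,d$. With such a normalisation I would substitute the explicit expressions for $A,B,C,D$ into $\cro(A,B,C,D)$ and check that it collapses to $-1$. As a guiding sanity check, for the symmetric quadruple $a,b,c,d=1,i,-1,-i$ one computes $A,B,C,D=e^{i\pi/4},e^{3i\pi/4},e^{5i\pi/4},e^{7i\pi/4}$, whose cross ratio is indeed $-1$.

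Once $\cro(A,B,C,D)=-1$ is established in the plane, pulling this identity back through the inverse M\"obius transformation (again by invariance of $\cro$) gives it for the original configuration. The value $-1$ then yields the concyclicity at no extra cost: it lies in $\Rea\Hds$, while $A,B,C,D\in\Img\Hds$ by Lemma \ref{lem:quaf}, so Lemma \ref{lem:cro-ratio}(2) shows that $A,B,C,D$ are concyclic. I expect the one genuine obstacle to be the computation of the previous paragraph. Each of the four points carries its own radical $\sqrt{\cro(\cdots)}$, and the delicate issue is to fix the branches of these four square roots consistently, using the polar square root of Table \ref{table: hamquaop} which is well defined here because the relevant cross ratios avoid $\Rds_{\le 0}$, as noted after its definition. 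Once the signs are pinned down the remaining simplification to exactly $-1$ is routine, if lengthy.
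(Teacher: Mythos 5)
Your proposal follows essentially the same route as the paper's proof: both use Lemma \ref{lem:quaf} to place $A,B,C,D$ on the circumsphere of $a,b,c,d$, invoke M\"obius invariance of $f$ and $\cro$ to reduce to a computation with complex numbers, and then deduce concyclicity from the realness of the value $-1$ via Lemma \ref{lem:cro-ratio}. The only difference is that the paper delegates the final planar computation to Theorem 1 of \cite{muller2021discrete}, whereas you sketch carrying it out directly (with a correct sanity check on the quadruple $1,i,-1,-i$); your attention to the consistent choice of branches of the square roots is exactly the point that makes that computation delicate.
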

\begin{proof}
    We only need to prove the Equation (\ref{eq:crominus1}) holds. From Lemma \ref{lem:quaf} the four points $A,B,C$ and $D$ lie on a common sphere i.e. the circumsphere of $a,b,c,d$. Let us fix an unit vector $v_0\in\Rds^3$. It is clear that the space $\Cds_{v_0}:=\{[r,kv_0]:r,v\in\Rds\}$ is isomorphic to the complex plane $\Cds$. Let $\sigma$ is the M\"{o}bius transformation that maps the circumsphere of $a,b,c,d$ to $\Cds_{v_0}$.

    From Lemma \ref{lem:cro-ratio}, that we have $\cro(A,B,C,D)=\cro(\sigma(A),\sigma(B),\sigma(C),\sigma(D))$. Hence without loss of generality, we may assume $a,b,c,d$ are both complex numbers. In this case, the Equation \ref{eq:crominus1} comes from a direct compute, the details are in Theorem 1 of \cite{muller2021discrete}.
\end{proof}
\subsection{Curvatures and Torsions for Discrete Curves}\label{subsec:discrete-cur-tor}
Let us consider the discrete case. Recall that a \textit{discrete curve}is a polygonal curve in $\Rds^2$ or $\Rds^3$ given by its vertices via a map $\gamma:\Zds\to\Rds^3$. We will see the definition of curvature and torsion for a discrete curve follows similar philosophy as their smooth versions. Historically, the curvature for a discrete curve has a long history and is widely used in computer vision and study of complex networks. However, the torsion is pretty new, one is first introduced in \cite{muller2021discrete}. We will introduce them in this section.

Let $\gamma_i$ be a vertex of the discrete curve. From the Proposition \ref{prop:four-inserting}, for each four elements tuple $(\gamma_{i-1},\gamma_{i},\gamma_{i+1},\gamma_{i+2})$ there exist four inserting points (see Figure \ref{fig:dis-torsion}) i.e.
\begin{equation}
    \begin{aligned}
        A_i & = & f(\gamma_{i+2},\gamma_{i-1},\gamma_{i},\gamma_{i+1}),\ B_i & = & f(\gamma_{i-1},\gamma_{i},\gamma_{i+1},\gamma_{i+2}) \\
        C_i & = & f(\gamma_{i},\gamma_{i+1},\gamma_{i+2},\gamma_{i-1}),\ D_i & = & f(\gamma_{i+1},\gamma_{i+2},\gamma_{i-1},\gamma_{i}) \\
    \end{aligned}
\end{equation}
The Lemma \ref{lem:quaf} implies that the four points lie on the circumsphere of $\gamma_{i-1},\gamma_{i},\gamma_{i+1}$ and $\gamma_{i+2}$ which we take it as \textit{the discrete osculating sphere at $\gamma_i$}.
Note that $\gamma_{i-1},\gamma_{i},\gamma_{i+1}$ and $\gamma_{i+2}$ are not concyclic in general, but the Proposition \ref{prop:four-inserting} implies that the four points $A_i$, $B_i$, $C_i$ and $D_i$ are concyclic.

\begin{figure}[ht]
	\centering
	\includegraphics[width=6cm, height=6cm]{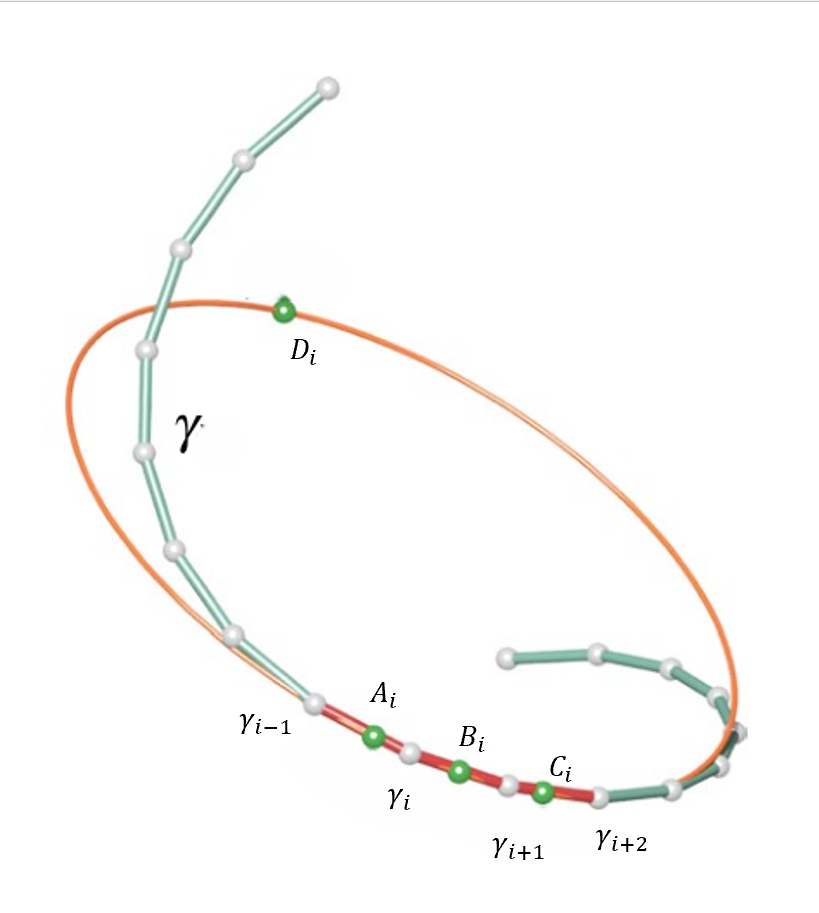}
	\caption{The computation of discrete curvature and discrete torsion where the inserting points were marked with green and the orange circle is the osculating circle at $\gamma_i$. }\label{fig:dis-torsion}
\end{figure}
\begin{Def}\label{def:osccir}
    Let $\gamma$, $\gamma_i$,$A_i$,$B_i$,$C_i$ and $D_i$ as above, the circle denoted by $k_i$ given by $A_i$,$B_i$,$C_i$ and $D_i$ is \textit{the osculating circle of $\gamma$ at $\gamma_i$}. The inverse of the radius of $k_i$ the \textit{discrete curvature at $\gamma_i$} denoted by $\kappa_i$.
\end{Def}

\begin{Def}\label{def:nor-tor}
    \textit{The normal unit vector $N_i$ of $\gamma$ at $\gamma_i$} to be the unit normal vector of $k_i$ at $B_i$. The plane passing through $B_i$ and normal to $N_i$ as \textit{the osculating plane}.
\end{Def}
\begin{Def}
    The discrete torsion of $\gamma$ at $\gamma_i$ is defined as following
    \begin{equation}
        \tau_i:=-\frac{9<\Img\cro(\gamma_{i-1},\gamma_i,\gamma_{i+1},\gamma_{i+2}),N_i>}{2\kappa_i\parallel \gamma_i-\gamma_{i+1}\parallel^2}
    \end{equation}
\end{Def}
\begin{rmk}
    The connection of discrete torsion and smooth version are as following: Let us assume in this Remark that $\gamma(t)$ be a smooth curve and $t$ be any parameter. In this case the torsion of $\gamma$ at $\gamma(t_0)$ could be computed as
    \begin{equation}
        \begin{aligned}
            \tau & = & -\frac{<\gamma'\times\gamma'',\gamma'''>}{\parallel \gamma'\times\gamma''\parallel^2} & = & -\frac{\det(\gamma',\gamma'',\gamma''')}{\parallel \gamma'\times\gamma''\parallel^2} \\
                 & = & \frac{\det(\gamma''',\gamma',\gamma'')}{\parallel \gamma'\times\gamma''\parallel^2}   & = & \frac{<\gamma'\times\gamma''',\gamma''>}{\parallel \gamma'\times\gamma''\parallel^2}
        \end{aligned}
    \end{equation}
    Recall that $\kappa=\frac{\parallel \gamma'\times\gamma''\parallel}{\parallel \gamma'\parallel^3}$ and
    \begin{equation}
        N=B\times T=\frac{\gamma''<\gamma',\gamma'>-\gamma'<\gamma',\gamma''>}{\parallel\gamma'\times\gamma''\parallel\parallel\gamma'\parallel}=\frac{1}{\parallel\gamma'\parallel^2\kappa}\gamma''-\frac{<\gamma',\gamma''>}{\parallel\gamma'\times\gamma''\parallel\parallel\gamma'\parallel}\gamma'
    \end{equation}
    Hence, we have the following
    \begin{equation}
        \tau=\frac{<\gamma'\times\gamma''',N>}{\kappa\parallel\gamma'\parallel^4}
    \end{equation}
    In the discrete case, the role of $\gamma'\times\gamma'''$ is replaced by $\Img\cro(\gamma_{i-1},\gamma_i,\gamma_{i+1},\gamma_{i+2})$. We will show below that such defined discrete torsion satisfies our expectations.
\end{rmk}
Now let us introduce without proof several properties of the definitions above which reveal the fact that the discrete curvature, normal vector and torsion defined above is the approximation of the smooth version.
\begin{prop}\label{Prop:plar-tor-van}
    The discrete torsion vanishes for planer discrete curve.
\end{prop}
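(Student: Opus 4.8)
The plan is to mirror the elementary smooth fact that a planar curve has $\gamma'\times\gamma'''$ orthogonal to its plane while the principal normal $N$ lies inside that plane, so that the numerator $\langle\gamma'\times\gamma''',N\rangle$ appearing in the torsion vanishes identically. Fix an index $i$ and let $P\subset\Rds^3$ be the plane containing the curve, so that in particular $\gamma_{i-1},\gamma_i,\gamma_{i+1},\gamma_{i+2}\in P$. The first step is to locate the vector $\Img\cro(\gamma_{i-1},\gamma_i,\gamma_{i+1},\gamma_{i+2})$: by the Remark following Lemma \ref{lem:imcro}, in the coplanar case the center of the circumsphere degenerates to the point at infinity and this imaginary part is orthogonal to $P$, i.e. it is a real multiple of a fixed unit normal $n_P$ of $P$.

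The second step is to show that the osculating circle $k_i$ lies in $P$, whence its unit normal $N_i$ at $B_i$ — being the normal of the circle $k_i$ within its own plane — also lies in $P$. By Lemma \ref{lem:quaf} the four inserting points $A_i,B_i,C_i,D_i$ lie on the circumsphere of $\gamma_{i-1},\gamma_i,\gamma_{i+1},\gamma_{i+2}$; when these base points are coplanar that circumsphere is precisely $P$, viewed as a sphere through $\infty$ (the same degeneration recorded after Lemma \ref{lem:imcro}). Hence $A_i,B_i,C_i,D_i\in P$, the circle $k_i$ through them lies in $P$, and $N_i\in P$. Combining the two steps gives $\langle\Img\cro(\gamma_{i-1},\gamma_i,\gamma_{i+1},\gamma_{i+2}),N_i\rangle=0$, since one factor is parallel to $n_P$ and the other is orthogonal to it; therefore $\tau_i=0$ for every $i$, which is the claim.

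The main obstacle is the second step, namely making rigorous that the inserting points, and hence $k_i$, stay inside $P$ once the four base points are coplanar — the notion of ``circumsphere'' must be read in the conformal sense where a plane is a sphere through $\infty$. The cleanest route uses the M\"{o}bius invariance of $f$ (the Corollary after Lemma \ref{lem:quaf}): one applies a M\"{o}bius transformation carrying $P\cup\{\infty\}$ onto the standard copy $\Cds_{v_0}$, so that the base points and all four inserting points become ordinary coplanar points, reads off $k_i\subset\Cds_{v_0}$ there, and transports the conclusion back to $P$. Once $N_i\in P$ is secured, the remainder is exactly the same orthogonality bookkeeping as in the smooth computation recorded in the Remark preceding this proposition.
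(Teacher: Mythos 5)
Your proof is correct and follows essentially the same route as the paper: orthogonality of $\Img\cro(\gamma_{i-1},\gamma_i,\gamma_{i+1},\gamma_{i+2})$ to $P$ via Lemma \ref{lem:imcro}, plus $N_i\subset P$ from the definition of the osculating circle. The paper simply asserts $N_i\subset P$ from Definition \ref{def:nor-tor}, whereas you supply the justification (the circumsphere degenerates to $P$, so the inserting points and hence $k_i$ lie in $P$); this is a welcome elaboration, not a different argument.
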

\begin{proof}
    If $\gamma$ is a discrete curve lying on the plane $P$, the vector $\Img\cro(\gamma_{i-1},\gamma_i,\gamma_{i+1},\gamma_{i+2})$ is orthogonal to $P$  from the Lemma \ref{lem:imcro}. From the Definition \ref{def:nor-tor}, $N_i\subset P$. Hence the discrete torsion vanish at all points.
\end{proof}

\begin{thm}
    Let $s:\Rds\to \Rds^3$ be a smooth space curve, let $\mu,\epsilon\in \Rds$ be real numbers and let the discrete curve $\gamma:\Zds\to \Rds^3$ with $\gamma_i:=s(\mu+(2i-1)\epsilon)$ be the sample of $s$. Let $\kappa$, resp. $N$ and $\tau$ be the curvature reps. unit normal vector and torsion of $s$ at $s(\mu)$. We have the following properties
    \begin{enumerate}
        \item The discrete curvature is a second approximation of $\kappa_s$, i.e. $\kappa_0=\kappa_s+o(\epsilon^2)$,
        \item The center $m_0$ of the discrete osculating circle $k_0$ of $\gamma$ converges to the center of the smooth osculating circle of $s$ at the same rate, i.e. $m_0-s(\mu)=\frac{N(\mu)}{\kappa_s(u)}+o(\epsilon^2)$,
        \item The discrete Frenet frame $(T_0, N_0, B_0)$ is a second-order approximation of the smooth Frenet frame $(T, N, B)$,
        \item The discrete torsion is a second-order approximation of the smooth torsion i.e. $\tau_0=\tau+o(\epsilon^2)$
    \end{enumerate}
\end{thm}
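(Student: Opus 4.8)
\section*{Proof proposal}

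The plan is to reduce all four statements to a single Taylor expansion in $\epsilon$ and to propagate it through the quaternionic construction of the inserting points and the osculating circle. Reading $s$ as arc-length parametrized (as in Section \ref{subsec:smooth-cur-tor}), the prescribed samples sit at parameter values $\mu\pm\epsilon$ and $\mu\pm3\epsilon$, symmetric about $\mu$. Since $\cro$ and $f$ are invariant under rigid motions, I may further assume $s(\mu)=0$ with the Frenet frame equal to the standard basis, so that $s'(\mu)=T$, $s''(\mu)=\kappa_s N$, and, by Frenet--Serret, $s'''(\mu)=-\kappa_s^2 T+\kappa_s' N+\kappa_s\tau B$. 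Substituting $t\in\{-3\epsilon,-\epsilon,\epsilon,3\epsilon\}$ into $s(\mu+t)=s(\mu)+t s'+\tfrac{t^2}{2}s''+\tfrac{t^3}{6}s'''+O(t^4)$ gives explicit expansions of $\gamma_{-1},\gamma_0,\gamma_1,\gamma_2$ whose coefficients are polynomials in $\kappa_s,\kappa_s',\tau$; everything downstream is then expansion bookkeeping.

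A guiding structural remark is that the offsets $\{\pm\epsilon,\pm3\epsilon\}$ are symmetric about $0$, so the substitution $\epsilon\mapsto-\epsilon$ merely reverses the ordering of the four samples, i.e. traverses the curve backwards. Quantities attached to the unoriented osculating circle -- its radius and its center, hence $\kappa_0$ and $m_0$ -- are therefore even in $\epsilon$, which eliminates the first-order error for free. This reduces statements (1) and (2) to checking that the constant term is correct ($\kappa_0\to\kappa_s$ and $m_0\to s(\mu)+N/\kappa_s$) and that the $\epsilon^2$ coefficient vanishes; the latter cancellation is precisely what the symmetric choice of offsets is engineered to produce. Concretely, I would compute $A_0,B_0,C_0,D_0$ from the formula for $f$, invoke Lemma \ref{lem:quaf} and Proposition \ref{prop:four-inserting} to know a priori that they lie on the circumsphere of the samples and are concyclic with $\cro=-1$, and then read off the center and radius of the circle $k_0$ they span, expanded to order $\epsilon^2$.

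For statement (3) I would take $T_0$ to be the unit tangent of $k_0$ at $B_0$, let $N_0$ be the in-circle normal from Definition \ref{def:nor-tor}, and set the binormal to be $N_0\times T_0$, expanding each and comparing with $T,N,B$; evenness or oddness in $\epsilon$ again removes the linear error. For statement (4) I would substitute into $\tau_0=-\tfrac{9\langle\Img\cro(\gamma_{-1},\gamma_0,\gamma_1,\gamma_2),N_0\rangle}{2\kappa_0\|\gamma_0-\gamma_1\|^2}$. Here Lemma \ref{lem:imcro} identifies $\Img\cro(\gamma_{-1},\gamma_0,\gamma_1,\gamma_2)$ as the circumsphere normal at $\gamma_{-1}$; expanding it shows, as the preceding remark anticipates, that to leading order it reproduces $\gamma'\times\gamma'''$ up to an explicit power of $\epsilon$, while $\|\gamma_0-\gamma_1\|^2=4\epsilon^2+O(\epsilon^4)$ and $\kappa_0=\kappa_s+o(\epsilon^2)$ from part (1). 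Collecting orders, the $\epsilon$-powers cancel and the limit matches the smooth formula $\tau=\langle\gamma'\times\gamma''',N\rangle/(\kappa\|\gamma'\|^4)$ recalled in the remark; the constant $9/2$ in the definition is exactly what balances the spacing factors ($\|\gamma_0-\gamma_1\|^2\sim4\epsilon^2$ against the $\epsilon$-scaling of $\Img\cro$) to recover the correct normalization.

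The main obstacle is the asymptotic analysis of $f$, which contains the quaternionic square root $\sqrt{\cro(c,a,b,d)}$: because the four points coalesce as $\epsilon\to0$ the cross ratio tends to a degenerate value, so this square root must be expanded carefully through order $\epsilon^2$ with its $\epsilon^3$ remainder controlled, and the non-commutativity of $\Hds$ forces one to keep real and imaginary parts coupled throughout. A related subtlety is that $A_0,B_0,C_0,D_0$ all collapse to $s(\mu)$, so recovering the finite limiting radius and center of $k_0$ is a $0/0$ limit that only resolves once the expansion is pushed to the correct order; in particular the vanishing of the $\epsilon^2$ error coefficient (which upgrades the naive estimate to the stated $o(\epsilon^2)$) cannot be read off from symmetry alone and must be verified by this computation. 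These computations are carried out in \cite{muller2021discrete}.
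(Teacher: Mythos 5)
Your proposal is a correct strategy and, like the paper, it ultimately delegates the heavy computation to \cite{muller2021discrete}; but the reduction you choose is genuinely different from the one the paper indicates. The paper's proof is a citation plus a one-line description of the method: as in the proof of Proposition \ref{prop:four-inserting}, first transport everything to a complex plane $\Cds_{v_0}$ by a M\"obius transformation (using the M\"obius invariance of $\cro$ and $f$), then verify the asymptotics by a direct computation in $\Cds$. You instead stay in $\Rds^3\cong\Img\Hds$, Taylor-expand the four samples at the symmetric offsets $\pm\epsilon,\pm3\epsilon$ in the Frenet frame, and push the expansion through the quaternionic formulas, using the evenness of the unoriented circle data under $\epsilon\mapsto-\epsilon$ to kill the first-order error. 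The M\"obius route buys a commutative ambient ($\Cds$ instead of $\Hds$), which is exactly what makes the square root in $f$ and the $0/0$ limit you worry about tractable; your route buys directness and, importantly, is the more honest one for statement (4), since the ingredients of the torsion formula ($\|\gamma_0-\gamma_1\|$, the inner product with $N_0$) are not M\"obius invariant, so the paper's one-line description cannot be applied to them literally. Two small caveats on your side: reading $s$ as arc-length parametrized is not a harmless normalization here, because the sample points $s(\mu+(2i-1)\epsilon)$ move when you reparametrize, so the general case needs the same expansion with $\|s'\|\neq 1$ kept (as in the Remark following the definition of discrete torsion); and your evenness argument presupposes that order reversal permutes the inserting points $\{A_0,B_0,C_0,D_0\}$ among themselves, which is plausible from the diagonal-point interpretation but should be checked rather than asserted. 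Neither affects the validity of the overall plan, and you correctly flag that the vanishing of the $\epsilon^2$ coefficient must come from the explicit computation rather than from symmetry.
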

\begin{proof}
    This is the Theorem 4, Lemma 14, Lemma 15 and Theorem 5 in \cite{muller2021discrete}. The main points is similar as the proof of the Proposition \ref{prop:four-inserting}, we first map all points to a complex plane by a M\"{o}bius transformation. Then we could check by direct computation.
\end{proof}

\subsection{Applications to Analysis of the Structure of Amyloid fibrils}
We have shown a way of treating a peptide chain as a discrete curve and  its application in the Section \ref{sec:hop-dis}. In this section, we considered another way as following:
Recall that from the Figure \ref{fig: peptide chain}, the backbone consists of amino nitrogens, $\alpha$-carbons, carbonyl carbons and carbonyl oxygens. We treated the the coordinates of amino nitrogens, $\text{C}_{\alpha}$s, and carbonyl carbons on the backbone as the vertices of a discrete curve in $\Rds^3$. 

%We analyzed the backbone structures of ATTRs by describing the discrete curvature and torsion. First, 
For a fixed layer of ATTR, we may assume its layer index in the protein is $\ast$, let $\text{N}^{\ast}_i$ (resp. $\text{C}^{\ast}_{\alpha_i}$ and $\text{C}^{\ast}_i$) be the coordinates of amino nitrogens (resp. alpha carbons and carbonyl carbons) with $i\in \Zds^{\ge 1}$ being the index of amino acids (AAs) in the layer. Therefore, the curvature and torsion at each atom could be derived from the position of itself, one atom ahead and two atom after, e.g. $\text{C}^{\ast}_{\alpha_i}$, $\text{C}^{\ast}_i$, $\text{N}^{\ast}_{i+1}$ and $\text{C}^{\ast}_{\alpha_{i+1}}$ determine the curvature and the torsion at $\text{C}^{\ast}_i$. We computed the absolute values of the curvatures ($|\kappa|$) and torsions ($|\tau|$) at different atoms from all residues (the curvatures resp. torsions and atoms involved in the computation are listed in \ref{eq:cur-tor-atom}) in each ATTR fibril structure, as well as the arithmetic mean of absolute values of the curvatures ($\overline{|\kappa|}$) and torsions ($\overline{|\tau|}$), and the (unbiased) variance of $|\kappa|$ ($S^2(|\kappa|)$) and $|\tau|$ ($S^2(|\tau|)$), respectively.

%For a fixed peptide chain which is also a layer of the amyloid fibrils, let $\Ntxt_i$ resp. $\text{C}_{\alpha_i}$ and $\text{C}_i$ be the coordinates of amino nitrogens resp. $\alpha$-carbons and carbonyl carbons with $i\in \Zds^{\ge 1}$ being the indices of the residues in this layer. This is a discrete curve whose vertices points are $\text{N}_i$, $\text{C}_{\alpha_i}$ and $\text{C}_i$.
%By applying the definitions in Section \ref{sec:cur-tor}, we will compute the curvatures and torsions as below
\begin{equation}\label{eq:cur-tor-atom}
    \begin{aligned}
        \text{C}^{\ast}_{i-1},\text{N}^{\ast}_i,\text{C}^{\ast}_{\alpha_i},\text{C}^{\ast}_i     & \to & \hbox{Curvatures and Torsions at } \text{N}_i          \\
        \text{N}^{\ast}_i,\text{C}^{\ast}_{\alpha_i},\text{C}^{\ast}_i,\text{N}^{\ast}_{i+1}     & \to & \hbox{Curvatures and Torsions at } \text{C}_{\alpha_i} \\
        \text{C}^{\ast}_{\alpha_i},\text{C}^{\ast}_i,\text{C}^{\ast}_{i+1},\text{C}^{\ast}_{i+1} & \to & \hbox{Curvatures and Torsions at } \text{C}_{i}        \\
    \end{aligned}
\end{equation}
The pseudo-code of computing curvature and torsion is shown as the Algorithm \ref{alg:cur-tor}.{To solve the curvature and torsion at one atom, one needs to solve a $3\times 3$ linear equation for determining the center of the osculating circle. The computation complexity is a constant. Therefore the computation complexity for computing all discrete curvatures and torsions is linear with respect to the number of atoms. Moreover the computation could be conduct parallelly for large size of data set.}
\IncMargin{1em}
\begin{algorithm}
    \SetKwData{Left}{left}\SetKwData{This}{this}\SetKwData{Up}{up}
    \SetKwData{Compute}{compute}
    \SetKwFunction{Union}{Union}\SetKwFunction{FindCompress}{FindCompress}
    \SetKwInOut{Input}{input}\SetKwInOut{Output}{output}

    \Input{A series of shape $3l\times 3$ consists of coordinates of all amino nitrogens, $\alpha$-carbons and carbonyl carbons}
    \Output{A series of shape $3l\times 2$ consists of curvatures and torsions at each atom}
    \BlankLine
    %\emph{special treatment of the first line}\;
    \For{$i\leftarrow 1$ \KwTo $l-3$}{
        %\emph{special treatment of the first element of line $i$}\;

        \Compute inserting points $A,B,C,D$

        \Compute circumcircle center $O$ by solving equations \begin{eqnarray*}
            (X-\frac{A+B}{2})\overrightarrow{BA}&=&0\\
            (X-\frac{A+C}{2})\overrightarrow{CA}&=&0\\
            (\overrightarrow{AB}\times\overrightarrow{AC})\overrightarrow{XA}&=&0
        \end{eqnarray*}

        \Compute $\kappa=\frac{1}{\parallel\overrightarrow{AO}\parallel}$

        \Compute $N=\frac{\overrightarrow{AO}}{\parallel\overrightarrow{AO}\parallel}$

        \Compute torsion $\tau$

    }
    \caption{Algorithm of computing curvature and torsion}\label{alg:cur-tor}
\end{algorithm}\DecMargin{1em}

Let $(\ast-1)$ and $(\ast+1)$ be two nearby chains of the $\ast$-th chain. The backbone hydrogen bonds were always between accepter, the carbonyl oxygen of the $i$-th AA of the chain $\ast$ (denoted by $\text{O}_i^{\ast}$), and doner, the amino nitrogen of $(i+1)$-th AA of the chain $(\ast-1)$ or the chain $(\ast+1)$ (denoted by $\text{N}_{i+1}^{(\ast-1)}$ or $\text{N}_{i+1}^{(\ast+1)}$, respectively), according to the direction of the carbonyl. Let $\tilde{d}_{i,+}^{\ast}$ (resp. $\tilde{d}_{i,-}^{\ast}$) be the distance between $\text{O}_i^{\ast}$ and $\text{N}_{i+1}^{(\ast+1)}$ (resp. the distance between $\text{O}_i^{\ast}$ and $\text{N}_{i+1}^{(\ast-1)}$). The squared distance differences at the $i$-th AA of the chain $\ast$ is defined as,
\begin{equation}\label{eq:sq-dis-layers}
	\tilde{d}_i^{\ast}:=|(\tilde{d}_{i,-}^{\ast})^2-(\tilde{d}_{i,+}^{\ast})^2|
\end{equation} (See Fig.\ref{fig:computation-cur-tor-hydro-distance}).
\begin{figure}[ht]
    \centering
    \includegraphics[width=\textwidth]{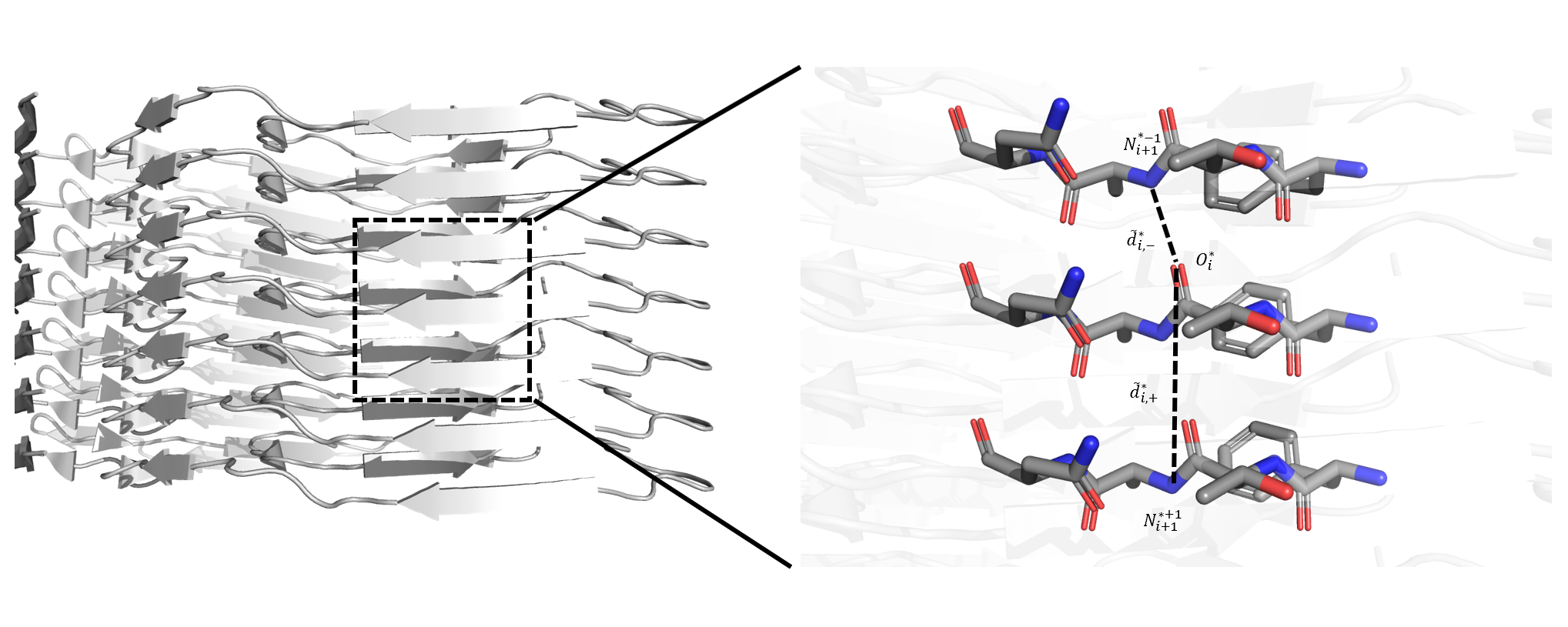}
    \caption{The squared distance differences at the $i$-th AA of the chain B.
    The atoms are marked in different colors, i.e., carbons marked with grey, nitrogens marked with blue and oxygens marked with red.}
    \label{fig:computation-cur-tor-hydro-distance}
\end{figure}

\begin{table}[!h]
    \centering
    \caption{\label{table:Ave-Var-Cur-Tor}The mean and variance of absolute values of curvatures and torsions for V30Ma-dimer.}
    \resizebox{1\textwidth}{!}{
		\centering
    \begin{tabular}{c c c c c c c c c c c c c c c c c}
        \hline
             & \textbf{$\bar{\kappa}$} & \textbf{$\bar{\kappa}_\Ntxt$} & \textbf{$\bar{\kappa}_{\Ctxt\alpha}$} & \textbf{$\bar{\kappa}_\Ctxt$} & \textbf{$\bar{\tau}$} & \textbf{$\bar{\tau}_\Ntxt$} & \textbf{$\bar{\tau}_{\Ctxt\alpha}$} & \textbf{$\bar{\tau}_\Ctxt$} &\textbf{$S^2(\kappa)$} & \textbf{$S^2(\kappa_\Ntxt)$} & \textbf{$S^2(\kappa_{\Ctxt\alpha})$} & \textbf{$S^2(\kappa_\Ctxt)$} & \textbf{$S^2(\tau)$} & \textbf{$S^2(\tau_\Ntxt)$} & \textbf{$S^2(\tau_{\Ctxt\alpha})$} & \textbf{$S^2(\tau_\Ctxt)$}\\
        \hline
        V30Ma-dimer & 0.305                   & 0.445                         & 0.421                                 & 0.050                         & 5.048                 & 1.521                       & 1.858                               & 11.766 &0.052                  & 0.024                        & 0.034                                & 0.000                        & 64.589               & 1.441                      & 9.727                              & 114.901                     \\
        \hline
    \end{tabular}
    }
\end{table}

We conducted the computation to several amyloid fibrils. We found that the torsions at carbonyl carbons are significantly larger than the one in other places (we list one example V30Ma-dimer in Figure \ref{fig:tor-dist-scatter} (A) and Table \ref{table:Ave-Var-Cur-Tor}). We showed that this anomalies reflect slight affects of the layer-layer hydrogen bonds.

Note that the carbonyls will form hydrogen bonds with amino hydrogen in the nearby chain along fibril axis, which play important roles in the stabilization of amyloid fibrils. We suggested that this anomaly in torsion reflects the existence and power of hydrogen bonds. We made the correlation analysis for each ATTR by using the linear model between the absolute value of torsions at carbonyl carbons in the $i$-th AA at the $\ast$-th layer ($\tau_i^{\ast}$) and the squared distance differences ($\tilde{d}_i^{\ast}$), where $\ast$ runs over all possible layers and $i$ run over all AAs in the layer (the case V30M-dimer is showed in Fig.\ref{fig:tor-dist-scatter} (B)). The zero assumption is listed as following,
\begin{equation}
    H_0:\hbox{The absolute value of torsion } |\tau_i^{\ast}|  \hbox{has non-negative correlation with } \tilde{d}_i^{\ast}
\end{equation} 
Note that the $p$-value is computed using the Wald Test with the $t$-distribution of the test statistic and S.D stands for the standard error.

We found the $p$-value of $H_0$ is rather small (We show one example V30Ma-dimer in Table \ref{table:cor-tor-dis}). Therefore, we suggested that $|\tau_i^{\ast}|$ is {negatively} correlated with $\tilde{d}_i^{\ast}$. Since the strength of hydrogen bonds is {negatively} related with the square of the distance, we could derive from all these facts that the above anomaly of the torsion is caused by the layer-to-layer hydrogen bonds.
%Without loss of generality, we may assume the nearby three layers of V30Ma-dimer is marked as A,B and C. We processed the correlation analysis between $|\tau_i|$, the absolute value of torsion, and $|(\tilde{d}^i_{B,-})^2-(\tilde{d}^i_{B,+})^2|$, where $\tilde{d}^{i,-}_{B}$ is the Euclidean distance of $\Otxt_i(B)$ and $\Ntxt_{i+1}(A)$ and $\tilde{d}^{i,+}_{B}$ is the Euclidean distance between $\Otxt_i(B)$ and $\Ntxt_{i+1}(C)$. We found that although the six atoms form a peptide bond are coplaner, they have a tend to leave this plane under the influence of the layer-to-layer hydro-bond.
\begin{figure}[!ht]
	\sidesubfloat[]{
		\begin{minipage}[t]{0.4\linewidth}
			\centering
			\includegraphics[width=\linewidth]{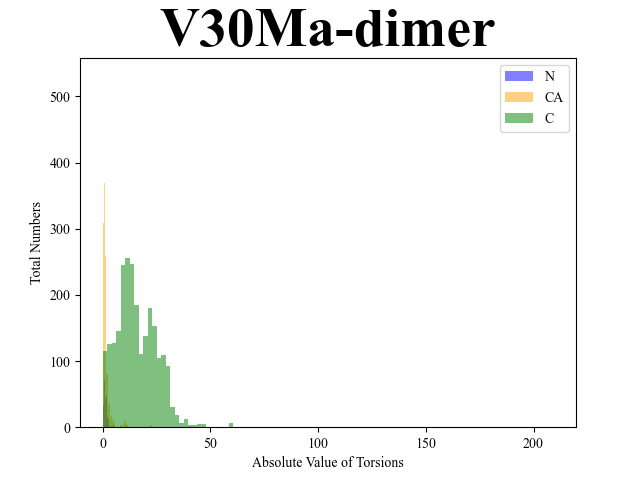}
		\end{minipage}
	}
	\hfill
	\sidesubfloat[]{
		\begin{minipage}[t]{0.4\linewidth}
			\centering
			\includegraphics[width=\linewidth]{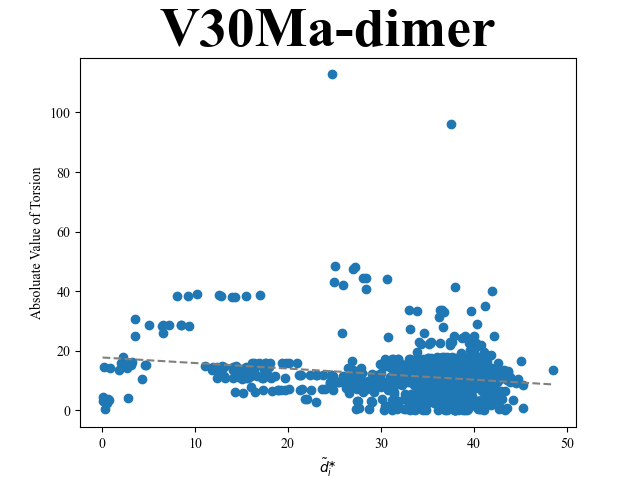}
		\end{minipage}
	}

	\caption{{(A) Distributions of absolute values of Torsions for V30Ma-dimer. The $x$-axis represents the absolute value of torsion and the $y$-axis represents the number of atoms. We take purple bar for nitrogens, orange for alpha carbons and green for carbonyl carbons. The discrete torsion is dimensionless. (B)The scatter of absolute values of torsions at carbonyl carbons $\tau_i^{\ast}$ against the squared distance differences $\tilde{d}_i^{\ast}$ between nearby layers
    for all amyloid fibrils of V30Ma-dimer. Recall that $\tilde{d}_i^{\ast}$ is defined in Equation \ref{eq:sq-dis-layers} where $\ast$ stands for the index of the layer in the protein and $i$ stands for the index of AA in the layer. The horizontal axis stands for the value  $\tilde{d}_i^{\ast}$, the vertical axis stands for $|\tau_i^{\ast}|$ and the dashed grey line is the result of regression.}}
	\label{fig:tor-dist-scatter}	
\end{figure}

%\begin{figure}[ht]
%    \centering
%    \includegraphics[height=7cm, width=12cm]{Fig/7OB4-Torsion-Distribution-All.png}
%    \caption{Distributions of absolute values of Torsions for V30Ma-dimer. The $x$-axis represents the absolute value of torsion and the $y$-axis represents the number of atoms. We take purple bar for nitrogens, orange for alpha carbons and green for carbonyl carbons. The discrete torsion is dimensionless.}
%    \label{fig: pro-tor-scatter}
%\end{figure}

%\begin{figure}[ht]
%    \centering
%    \includegraphics[height=7cm, width=12cm]{Fig/7OB4_Correlation_scatter.png}
%    \caption{\YP{The scatter of absolute values of torsions at carbonyl carbons $\tau_i^{\ast}$ against the squared distance differences $\tilde{d}_i^{\ast}$ between nearby layers
%    for all amyloid fibrils of V30Ma-dimer. Recall that $\tilde{d}_i^{\ast}$ is defined in Equation \ref{eq:sq-dis-layers} where $\ast$ stands for the index of the layer in the protein and $i$ stands for the index of AA in the layer. The horizontal axis stands for the value  $\tilde{d}_i^{\ast}$, the vertical axis stands for $|\tau_i^{\ast}|$ and the dashed grey line is the result of regression.}}
%    \label{fig: pro-tor-distribution}
%\end{figure}

\begin{table}[ht]
    \centering
    \caption{\label{table:cor-tor-dis}The correlation of $|\tau_i|$ and $|(\tilde{d}^i_{\cdot,-})^2-(\tilde{d}^i_{\cdot,+})^2|$ for V30Ma-dimer. S.D stands for the standard error}
    \begin{tabular}{c c c c c c c}
        \hline
             & \textbf{Slope} & \textbf{Intercept} & \textbf{Pearson} & \textbf{$p$-value}    & \textbf{S.D slope} & \textbf{S.D intercept} \\
        \hline
        V30Ma-dimer & -0.187         & 17.765             & -0.194           & $2.325\times 10^{-9}$ & 0.032              & 1.094                  \\
        \hline
    \end{tabular}
\end{table}

\section{Topological Data Analysis}\label{sec:Top-Methods}

%The steps of PH analysis Specifically, a nested sequence of subcomplexes is first constructed based on a filtration parameter. After analyzing the homology of each subcomplex, the topological information is characterized by different persistence times of homological generators with respect to the filtration parameter.

\subsection{Persistent Homology in a Nutshell}
We are going to introduce the definition of PH first in a general setting. Let us take $\Kds$ be a fixed ring. Let $X$ be a set and $K$ be a simplicial complex constructed from $X$ which is a collection of non-empty finite subsets of $X$, such that for every set $\sigma$ in $K$ and every subset $\sigma'\subset \sigma$, the set $\sigma'$ also belongs to $K$.
We will call $\sigma\in K$ a simplex.

Suppose further that each simplex $\sigma$ is oriented, there exists a chain complex
\begin{equation}\label{eq:chain-complex}
	\cdots\to C_{n+1}(K) \xrightarrow{\partial_{n+1}} C_{n}(K)\to\cdots
\end{equation}
where $C_n(K)$ is the free $\Kds$-module generated by the basis of simplicies $\sigma=[x_0,\cdots,x_n]$ in $K$ of length $n+1$ and the boundary map $\partial_{n+1}:C_{n+1}(K)\to C_{n}(K)$ is a $\Kds$-linear map defined in a natural way.

\begin{Def}
	The filtration of $K$ is a nested sequence of subcomplexes $\{K_{t}\}_{t\in T}$ of $K$ with $T\subset \Rds$ such that if $t\leq t'$ then $K_{t}\subset K_{t'}$ and $\cup_{t\in T}K_{t}=K$. Note that the index set $T$ could be finite or infinite.

	%\begin{equation}\label{eq: filtration}
	%	\emptyset=K_0\subseteq{K_1}\subseteq{\cdots}\subseteq{K_n}=K.
	%\end{equation}
\end{Def}
Suppose $t\leq t'$ be two different real numbers, the inclusion map $K_{t}\hookrightarrow K_{t'}$ is also a map of chain complexes namely the following diagram commutes
\begin{equation}
	\xymatrix{\cdots\ar[r]&C_{n+1}(K_t)\ar[r]^{\partial_{n+1}^t}\ar[d]&C_{n}(K_t)\ar[d]\ar[r]^{\partial_n^t}&C_{n-1}(K_t)\ar[d]\ar[r]&\cdots\\
	\cdots\ar[r]&C_{n+1}(K_{t'})\ar[r]^{\partial_{n+1}^{t'}}&C_{n}(K_{t'})\ar[r]^{\partial_n^{t'}}&C_{n-1}(K_{t'})\ar[r]&\cdots}
\end{equation}
where $\partial_{n}^t$ is the restriction of $\partial_{n}$ to $C_n(K_t)$. Hence, it induces a map of homology group $H_{n}(K_{t})\to H_{n}(K_{t'})$.
\begin{Def}\label{def:persistent homology}
	The $(t,t')$-persistent $n$-th homology group is defined as the image of the induced map $H_{n}(K_{t})\to H_{n}(K_{t'})$.
	In other words, it is the following group
	\begin{equation}
		H_n^{t\to t'}=Z_n^t/(B_n^{t'}\cap Z_n^t),
	\end{equation}
	where $Z_n^t$ is the $n$-th cycle group of $K_t$, i.e., $\ker\partial_{n}^t$, and $B_n^{t'}$ is the $n$-th boundary group of $K_{t'}$, i.e., $\Img\partial_{n+1}^{t'}$. The rank of $H_n^{t\to t'}$ is called the $(t,t')$-persistent $n$-th Betti number denoted as $\beta_n^{t\to t'}$ \cite{zomorodian2004computing}.
\end{Def}
The value $t_0$ where for every $\delta>0$ the induced map $H_{n}(K_{t_0-\delta})\to H_{n}(K_{t_0+\delta})$ is not an isomorphism is called a \textit{homological critical value}.
\begin{rmk}
	In the real world applications, people always assume the coefficient ring $\Kds$ to be $\Zds/2\Zds$ in which case the orientation of the simplex is not important, since in such circumstances the boundary of the simplex is not related to the orientation. Hence we will usually omit the choice of orientation in the TDA process.
\end{rmk}
\begin{Def}
	A persistence $\Kds$ module $M$ is a family of $\Kds$-modules $M_i$, together with homomorphisms $\varphi_i : M_i \to M_{i+1}$. We say a persistence module $M$ of finite type if each component module is a finitely generated $\Kds$-module, and if there exist positive integer $m_0$ such that the maps $\varphi_i$ are isomorphisms for $i \ge m_0$ \cite{zomorodian2004computing}.
\end{Def}
\begin{rmk}
	There exists a functor from the category of persistence modules of finite type over $\Kds$ to the category of finitely generated non-negatively graded modules over the polynomial ring $\Kds[x]$, namely take the grade on $M$ as the natural one and $x$ acts on $M$ via the map $x\to \varphi$. From the Artin-Rees theory \cite{eisenbud2013commutative}, this construction turns out to be an equivalence of categories.
\end{rmk}
From our requirements, from now on let us assume in additional that $\Kds$ is a field and
the above simplicial complex $K$ is finite. This means each group in chain complex \ref{eq:chain-complex} is a finite dimensional vector space over $\Kds$. In this case, the `filtration' on the chain complex \ref{eq:chain-complex}, which will be called a persistent complex \cite{zomorodian2004computing}, is actually of finite length. In other words, the total chain complex $C_{\bullet}(K)$ and the homology module $H_{\bullet}(C_{\bullet}(K),\Kds)$ are both persistent module of finite type. The total chain complex $C_{\bullet}(K)$ is not only a graded $\Kds$-module but also a $\Kds[x]$-module with the variable $x$ of degree 1 acts as shift map via the filtration. The total homology module $H_{\bullet}(C_{\bullet}(K),\Kds)$ retains this $\Kds[x]$-module structure. And from the structure theorem for principal ideal domains (PID), note that $\Kds[x]$ is a PID for a field $\Kds$, we have the following decomposition
\begin{equation}\label{eq:permod-decom}
	H_{\bullet}(C_{\bullet}(K),\Kds)\cong \mathop{\oplus}\limits_{i}x^{t_i}(\Kds[x])\oplus(\mathop{\oplus}\limits_{j}x^{r_j}(\Kds[x]/(x^{s_j}\Kds[x])))
\end{equation}
Here $t_i$, $r_j$ and $s_j$ are all integers represent index of filtration. The decomposition theorem has the following meaning: the free parts of Equation \ref{eq:permod-decom} are in bijective correspondence with those homology generators birth at parameter $t_i$ and persist for all future parameter values. The torsion parts correspond to those homology generators birth at parameter $r_j$ and death at parameter $r_j + s_j$.

%\YP{From this decomposition, we now define a visualization of PH.}
%A \textit{P-interval} is an ordered pair $(i, j)$ with $0\leq i < j \in \Zds^\infty =  \Zds \cup \{+\infty\}$. We associate a graded $\Kds[x]$-module to a set $S$ of $P$-intervals via a bijection $Q$ defined as $Q(i,j) = \sum\limits^i \Kds[x]/(x^{j-i})$ for $P$-interval $(i, j)$. For a set of $P$-intervals $S=\{(i_1, j_1), (i_2, j_2), \dots, (i_n, j_n)\}$, we take
%\begin{equation}
%	Q(S) = \mathop{\oplus}\limits_{l=1}^n{Q(i_l, j_l)}.
%\end{equation}
%\YP{It turns out that} the correspondence $S \rightarrow Q(S)$ is actually a bijection between the finite sets of $P$-intervals and the finitely generated graded modules over the graded ring $\Kds[x]$.
%Consequently, the isomorphism classes of persistence modules of finite type over $\Kds$ are in bijective correspondence with the finite sets of $P$-intervals \cite{zomorodian2004computing}.
%\YP{Let us return to our homology module $H(C(K),\Kds)$, the set of P-intervals is then the set $\{(t_i,\infty),\cdots,(r_j,r_j+s_j),\cdots\}_{i,j}$. We call such visualization a persistent barcode.}

PD is one of the common visualization methods of the PH. PD is a multiset of points in the extended two-dimensional plane $\overline{ \Rds}^2$ where $\overline{ \Rds} = \Rds \cup \{\infty\}$, whose coordinates are respectively correspond to the birth and death time of the homology generators, union the diagonal points \cite{edelsbrunner2008persistent}.
%, where $\overline{R} = R \cup \infty$, $\mu_k^{s,t}=(\beta_k^{s, t-s-1}-\beta_k^{s, t-s})-(\beta_k^{s-1, t-s}-\beta_k^{s-1, t-s+1})$.
Given two PDs $P_1$ and $P_2$, there are two commonly used metrics to compare them, that is, the \emph{Wasserstein-$q$ distance} $W_q$ and \emph{Bottleneck distance} $W_\infty$. They are defined as following:
\begin{equation}
	W_q(P_1, P_2)= (\inf\limits_{\eta : P_1 \rightarrow P_2}\sum\limits_{p\in P_1} ||p-\eta(p)||_\infty^q)^{\frac{1}{q}},  % 
\end{equation}

\begin{equation}
	W_\infty(P_1, P_2)= \inf\limits_{\eta : P_1 \rightarrow P_2}\sup\limits_{p\in P_1} ||p-\eta(p)||_\infty,
\end{equation}
where $\eta$ runs over all the bijections between the points in the two diagrams. The \emph{Wasserstein-$q$ distance} is capable of capturing local differences in the PDs, while the \emph{Bottleneck distance} is more concerned with relatively global differences.

\subsection{The Vietoris-Rips (VR) complex}\label{subsec:tda}
For different types of data, there are various complexes to characterize the data, such as (1) for point cloud data (PCD), Alpha complex, VR complex, \v{C}ech complex and Witness complex are commonly used;  (2) for imaging data, cubical complex may be a good choice; (3) for graph or complex network data, path complex may perform better. By treating a layer of amyloid fibril as a discrete curve, there are several candidate complexes to characterize this PCD. Here, we adopted the VR complex since it is precise and easy to compute.

Let $X$ be a finite point set with a index in $n$-dimensional Euclidean space $(\Rds^n,d)$, where $d$ is the natural metric on the Euclidean space and $\Kds$ be the field $\Zds/2\Zds$.
\begin{Def}
	The Vietoris-Rips (VR) complex (denoted by $\VR(X)$) of $X$ with parameter $\epsilon$ is the set of all simplices $\sigma=\{x_0,\cdots, x_k\}$, such that $d(x_i, x_j) \leq 2\epsilon$ for all $(i, j)$ \cite{cang2018integration}. 	%$\sigma\subseteq{X}$, such that any pairwise distance of its points is at most 2$\epsilon$.
\end{Def}
As we explained before, the orientation of the simplex could be omitted.
And there exist a natural filtration on the Vietoris-Rips (VR) complex given by the parameter $\epsilon$. Since the number of $X$ is finite, this filtration is always of finite length, i.e.,%\emptyset=\Rip(X)_0\subseteq{\Rip(X)_1}\subseteq{\cdots}\subseteq{\Rip(X)_m}\\
\begin{equation}\label{eq: filtration}
	X=\VR(X)_0\subseteq{\VR(X)_1}\subseteq{\cdots}\subseteq{\VR(X)_m}
\end{equation}
We will modify our notations a bit as follows, let $i$ and $p$ be integers
\begin{equation}\label{persistent homology}
	H_n^{i,p}:=H_n^{i\to i+p}=Z_n^i/(B_n^{i+p}\cap Z_n^i),
\end{equation}
And we have similar modification for the notations of Betti number $\beta_k^{i,p}:=\beta_k^{i\to i+p}$.
%where $Z_n^i$ is the $n$-th cycle group of $\Rip(X)_i$ and $B_n^{i+p}$ is the $n$-th boundary group of $\Rip(X)_{i+p}$. 
%The rank of $H_n^{i,p}$ is called the $p$-persistent $n$-th Betti number of $\Rip(X)_i$, denoted as $\beta_k^{i,p}$.

Figure \ref{fig:pd7ob4} is the PD of the C-terminal fragment of chain A of V30MA-DIMER using the VR complex. The data set is given by taking the coordinates of $C_{\alpha}\text{s}$ on the C-terminal fragment. Since each chain of V30MA-DIMER is almost planner, we only computed the persistence on $H_0$ and $H_1$. Red points and blue points represent the 0-dimensional and 1-dimensional homological generators respectively in the Fig.\ref{fig:pd7ob4} and Fig.\ref{fig:tda}A.
\begin{figure}[ht]
	\centering
	\includegraphics[width=8cm, height=6cm]{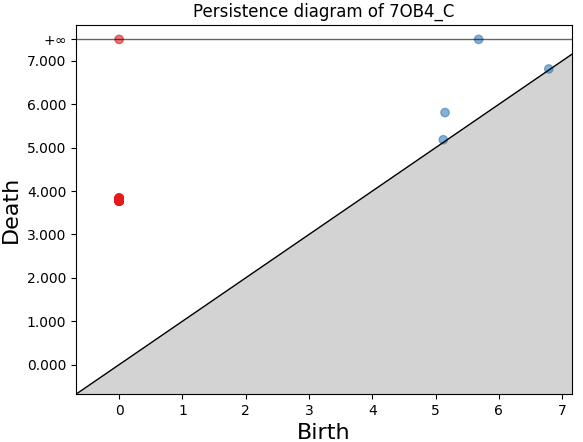}
	\caption{The PD of the C-terminal fragment of V30MA-DIMER. Red points and blue points represent the 0-dimensional and 1-dimensional homological generators, respectively. The unit is the Angstrom (\AA). }\label{fig:pd7ob4}
\end{figure}

\subsection{Applications to Analysis of the Structure of Amyloid fibrils}
The pathological function of amyloid fibril is largely determined by its structure. It is important to discriminate the structure of amyloid fibrils, followed by exploring the functions which are closely related to those structures.  In biology, the RMSD is a common indicator to quantify the difference between structures after aligning. Let $\text{Prot}_{\text{1}}=\{c_1, \dots, c_n\}$ and $\text{Prot}_{\text{2}}=\{c_1', \dots, c_n'\}$ be the coordinates of $C_{\alpha}\text{s}$ of two proteins, respectively. The RMSD between $\text{Prot}_{\text{1}}$ and $\text{Prot}_{\text{2}}$ is defined as
\begin{equation}
	\text{RMSD} = \sqrt{\frac{\sum\limits_{i=1}^{n}||c_i - c_i'||^2}{n}}.
\end{equation}

It is clear that the RMSD only considers the difference of pairwise distance (Fig.\ref{fig:tda}B). Here, we adopted the PH to quantify the difference between two amyloid fibril structures from multiscales (Fig.\ref{fig:tda}A). The pseudo-code of comparing two amyloid fibrils is shown as the Algorithm \ref{alg:TDA}. {Please contact us if you require the source code.} The results are displayed in the Fig.\ref{fig:tda}C.

%\begin{table}
%	\begin{tabular}{c c c c c}
%		\hline
%		Amyloid fibrils & RMSD (\AA) & $W_\infty$(\AA) & $W_2$(\AA) & $W_1$(\AA) \\
%		\hline
%		6SDZ-8ADE (N)   & 0.393      & 0.053           & 0.206      & 0.991      \\
%		6SDZ-8ADE (C)   & 0.449      & 1.028           & 1.142      & 4.428      \\
%		\hline
%	\end{tabular}
%	\caption{Indicators to quantify the difference of the structure of two layers of amyloid fibrils (PDB entries: 6SDZ and 8ADE). The letter in parentheses represents the terminal fragment of chain A of each amyloid fibril.}
%	\label{table: rmsd_w}
%\end{table}

\begin{figure}[!ht]
	\centering 
	\includegraphics[width=12cm]{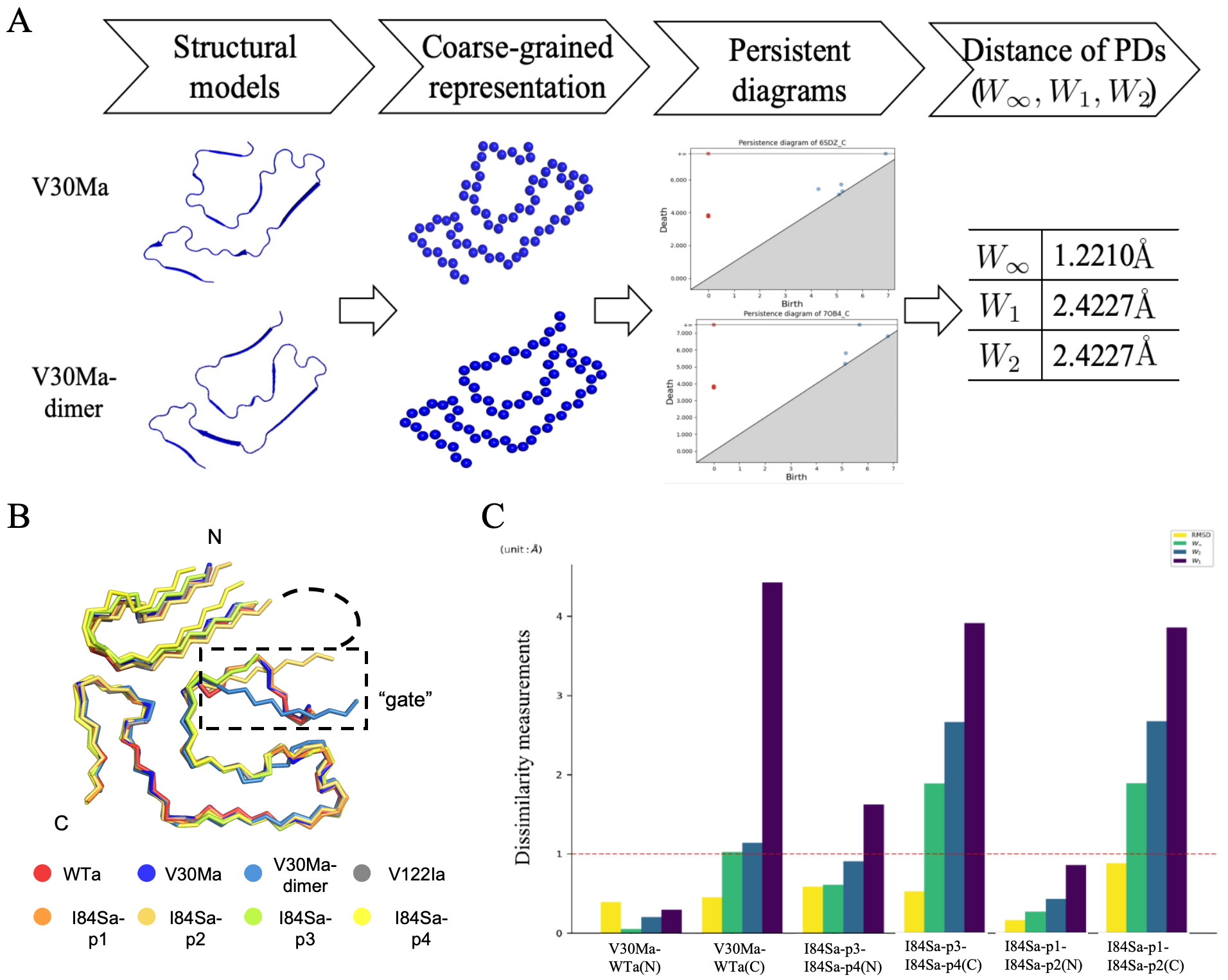}
	\caption{{Dissimilarity measurements. (A) Flowchart of capturing dissimilaritues between the C-terminals on chain A of V30Ma and V30Ma-dimer by adopting TDA. (B) The alignment between chain A of ATTRs. (C) Dissimilarity measurements based on various methods. The capital letters N and C represent the terminal fragments of chain A of each ATTR, respectively.}}
	\label{fig:tda}
\end{figure}
We found that the RMSDs of two fragments are both less than 1\AA. It indicates that the structure of {ATTRs} are considered as almost the same in the RMSD context. However, for example the patient carries 6SDZ and the patient carries 8ADE have different clinical manifestations \cite{steinebrei2022cryo}, which implies that there should be differences between the structure of these amyloid fibrils. We find that the global difference between these amyloid fibrils can be captured by the $W_\infty$ distance, while the local difference between these amyloid fibrils is scaled up as the exponent order $q$ decreases. PH considers the higher-order relations among these $C_{\alpha}\text{s}$, which could give us more information to distinguish the structure of amyloid fibrils (Fig.\ref{fig:tda}C).

\IncMargin{1em}
\begin{algorithm}
	\SetKwData{Left}{left}\SetKwData{This}{this}\SetKwData{Up}{up}
	\SetKwData{Compute}{compute}
	\SetKwFunction{Union}{Union}\SetKwFunction{FindCompress}{FindCompress}
	\SetKwInOut{Input}{input}\SetKwInOut{Output}{output}

	\Input{The coordinates of atoms of two fibril structures $\text{Prot}_{\text{1}}$  and $\text{Prot}_{\text{2}}$ , the maximal filtration value $l$, the exponent order $q$}
	\Output{Distances $\{d_\infty, d_q\}$ to quantify the difference between the two fibril structures}
	\BlankLine
	%\emph{special treatment of the first line}\;
	\For{$i\leftarrow 1$ \KwTo $2$}{
	%\emph{special treatment of the first element of line $i$}\;
	$f_i \leftarrow$  the filtration of $P_i$ based on a suitable rule, that is,  $f_i=\{v_j\}_{j=1}^n$ where $v_n \le l$

	$\{K^i_j\}_{j=1}^n  \leftarrow$ a nested sequence of simplicial complexes based on $f_i$

	$\text{PD}_{i}  \leftarrow$ the persistent homology analysis of $\{K^i_j\}_{j=1}^n$
	}

	$d_\infty \leftarrow W_\infty(\text{PD}_1, \text{PD}_2)$

	$d_q \leftarrow W_q(\text{PD}_1, \text{PD}_2)$
	\caption{Algorithm of using TDA to compare fibril structures}\label{alg:TDA}
\end{algorithm}\DecMargin{1em}

\section{Discussion}\label{sec: discussion}
{In this study, we introduced geometrical and topological tools for analysing structures of amyloid fibrils}. Geometrically, we defined the (truncated) $n$-hop distance to analyze the potential conformational transformations during the formation of ATTR fibrils from TTR tetramers. We found that the disruption of the $\alpha$-helix alters the interactions between the original secondary structures, which affected the values of hop distance significantly.  
Moreover, based on the special  ``pseudo-2D'' structure of amyloid fibril, we defined a novel discrete curvature and torsion for structural analysis of intra- and inter- layers of ATTR. The torsions at the carbonyl carbons are significantly greater than those at other backbone atoms, and the absolute value of torsions is negatively correlated with the squared distance differences between the atoms on nearby layers that form the backbone hydrogen bonds. We concluded that the anomalies in the torsions of carbonyl carbons reflect slight effects of the layer-layer hydrogen bonds, the widest range of interactions in cross-$\beta$ structures. It is still unclear which specific physical properties of amyloid fibrils are relevant to the torsions. More datasets and experiments are needed to establish the link between these geometrical trends and macroscopic physical properties.

Additionally, we compared the differences between various polymorphs of ATTR adopting topological analysis. Compared to RMSD, TDA can more clearly distinguish the local differences between polymorphs and is expected to be useful for classifying different fibrillar polymorphs formed by the same amyloid protein. Unlike TTR polymorphs, which have low overall differences (small RMSDs), many amyloid proteins can form significantly different polymorphs in different contexts. {However, the application of TDA to the analysis of these polymorphs is still under development due to the deficiency of structures of amyloid fibrils. Because of the small acount of data, the superiority of TDA has not clearly stand out compared with other techniques of  data dimensionality reduction, such as PCA can not distinguish I84S-P1 and I84S-P2(Fig.\ref{fig: pca}), but the others can be  distinguished. Whereas, TDA has advantages that these methods do not, such as the multiscale characterization of structures of amyloid fibrils and the interpretative of the extracted structural features, we believe that the combination of TDA with  machine learnings could give novel insights in the research of amyloid fibrils in the future. }

\begin{figure}
	\centering
	\includegraphics[height=3cm, width=4.5cm]{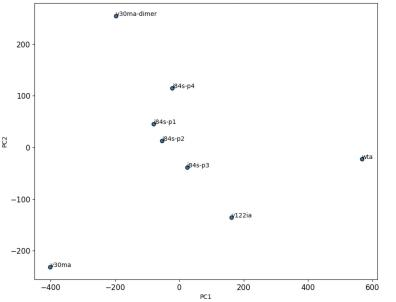}
	\caption{PCA can not distinguish I84S-p1 and I84S-p2 well.}
	\label{fig: pca}
\end{figure}

\bibliographystyle{plain}
\bibliography{Reference.bib}

\begin{thebibliography}{10}

\bibitem{adams2014javaplex}
Henry Adams, Andrew Tausz, and Mikael Vejdemo-Johansson.
\newblock Javaplex: A research software package for persistent (co) homology.
\newblock In {\em Mathematical Software--ICMS 2014: 4th International Congress,
  Seoul, South Korea, August 5-9, 2014. Proceedings 4}, pages 129--136.
  Springer, 2014.

\bibitem{bauer2021ripser}
Ulrich Bauer.
\newblock Ripser: efficient computation of vietoris--rips persistence barcodes.
\newblock {\em Journal of Applied and Computational Topology}, 5(3):391--423,
  2021.

\bibitem{bell2019weighted}
Gregory Bell, Austin Lawson, Joshua Martin, James Rudzinski, and Clifford
  Smyth.
\newblock Weighted persistent homology.
\newblock {\em Involve, a Journal of Mathematics}, 12(5):823--837, 2019.

\bibitem{berman2000protein}
Helen~M Berman, John Westbrook, Zukang Feng, Gary Gilliland, Talapady~N Bhat,
  Helge Weissig, Ilya~N Shindyalov, and Philip~E Bourne.
\newblock The protein data bank.
\newblock {\em Nucleic acids research}, 28(1):235--242, 2000.

\bibitem{bobenko1996discrete}
Alexander Bobenko and Ulrich Pinkall.
\newblock Discrete isothermic surfaces.
\newblock {\em Journal für die reine und angewandte Mathematik}, 1996.

\bibitem{bramer2020atom}
David Bramer and Guo-Wei Wei.
\newblock Atom-specific persistent homology and its application to protein
  flexibility analysis.
\newblock {\em Computational and mathematical biophysics}, 8(1):1--35, 2020.

\bibitem{brooks1983harmonic}
Bernard Brooks and Martin Karplus.
\newblock Harmonic dynamics of proteins: normal modes and fluctuations in
  bovine pancreatic trypsin inhibitor.
\newblock {\em Proceedings of the National Academy of Sciences},
  80(21):6571--6575, 1983.

\bibitem{brooks1985normal}
Bernard Brooks and Martin Karplus.
\newblock Normal modes for specific motions of macromolecules: application to
  the hinge-bending mode of lysozyme.
\newblock {\em Proceedings of the National Academy of Sciences},
  82(15):4995--4999, 1985.

\bibitem{cang2018integration}
Zixuan Cang and Guo-Wei Wei.
\newblock Integration of element specific persistent homology and machine
  learning for protein-ligand binding affinity prediction.
\newblock {\em International journal for numerical methods in biomedical
  engineering}, 34(2):e2914, 2018.

\bibitem{carroll2014survey}
Daniel Carroll, Eleanor Hankins, Emek Kose, and Ivan Sterling.
\newblock A survey of the differential geometry of discrete curves.
\newblock {\em The Mathematical Intelligencer}, 36:28--35, 2014.

\bibitem{charon2022shape}
Nicolas Charon and Laurent Younes.
\newblock Shape spaces: From geometry to biological plausibility.
\newblock {\em Handbook of Mathematical Models and Algorithms in Computer
  Vision and Imaging: Mathematical Imaging and Vision}, pages 1--30, 2022.

\bibitem{chen2017amyloid}
Guo-fang Chen, Ting-hai Xu, Yan Yan, Yu-ren Zhou, Yi~Jiang, Karsten Melcher,
  and H~Eric Xu.
\newblock Amyloid beta: structure, biology and structure-based therapeutic
  development.
\newblock {\em Acta Pharmacologica Sinica}, 38(9):1205--1235, 2017.

\bibitem{chen2020mutations}
Jiahui Chen, Rui Wang, Menglun Wang, and Guo-Wei Wei.
\newblock Mutations strengthened sars-cov-2 infectivity.
\newblock {\em Journal of molecular biology}, 432(19):5212--5226, 2020.

\bibitem{chen2022bscnets}
Yuzhou Chen, Yulia~R Gel, and H~Vincent Poor.
\newblock Bscnets: Block simplicial complex neural networks.
\newblock {\em Proceedings of the aaai conference on artificial intelligence},
  36(6):6333--6341, 2022.

\bibitem{chiti2017protein}
Fabrizio Chiti and Christopher~M Dobson.
\newblock Protein misfolding, amyloid formation, and human disease: a summary
  of progress over the last decade.
\newblock {\em Annual review of biochemistry}, 86:27--68, 2017.

\bibitem{clough2020topological}
James~R Clough, Nicholas Byrne, Ilkay Oksuz, Veronika~A Zimmer, Julia~A
  Schnabel, and Andrew~P King.
\newblock A topological loss function for deep-learning based image
  segmentation using persistent homology.
\newblock {\em IEEE transactions on pattern analysis and machine intelligence},
  44(12):8766--8778, 2020.

\bibitem{connolly1983analytical}
Michael~L Connolly.
\newblock Analytical molecular surface calculation.
\newblock {\em Journal of applied crystallography}, 16(5):548--558, 1983.

\bibitem{corey1953molecular}
Robert~B Corey and Linus Pauling.
\newblock Molecular models of amino acids, peptides, and proteins.
\newblock {\em Review of Scientific Instruments}, 24(8):621--627, 1953.

\bibitem{dey2018protein}
Tamal~K Dey and Sayan Mandal.
\newblock Protein classification with improved topological data analysis.
\newblock In {\em 18th International Workshop on Algorithms in Bioinformatics
  (WABI 2018)}. Schloss-Dagstuhl-Leibniz Zentrum f{\"u}r Informatik, 2018.

\bibitem{diepeveen2023riemannian}
Willem Diepeveen, Carlos Esteve-Yag{\"u}e, Jan Lellmann, Ozan {\"O}ktem, and
  Carola-Bibiane Sch{\"o}nlieb.
\newblock Riemannian geometry for efficient analysis of protein dynamics data.
\newblock {\em arXiv preprint arXiv:2308.07818}, 2023.

\bibitem{du2024multiscale}
Hongyan Du, Guo-Wei Wei, and Tingjun Hou.
\newblock Multiscale topology in interactomic network: from transcriptome to
  antiaddiction drug repurposing.
\newblock {\em Briefings in Bioinformatics}, 25(2):bbae054, 2024.

\bibitem{edelsbrunner2008persistent}
Herbert Edelsbrunner, John Harer, et~al.
\newblock Persistent homology-a survey.
\newblock {\em Contemporary mathematics}, 453(26):257--282, 2008.

\bibitem{eisenberg2012amyloid}
David Eisenberg and Mathias Jucker.
\newblock The amyloid state of proteins in human diseases.
\newblock {\em Cell}, 148(6):1188--1203, 2012.

\bibitem{eisenbud2013commutative}
David Eisenbud.
\newblock {\em Commutative algebra: with a view toward algebraic geometry},
  volume 150.
\newblock Springer Science \& Business Media, 2013.

\bibitem{elyasi2019introduction}
Naiereh Elyasi and Mehdi~Hosseini Moghadam.
\newblock An introduction to a new text classification and visualization for
  natural language processing using topological data analysis.
\newblock {\em arXiv preprint arXiv:1906.01726}, 2019.

\bibitem{finkelstein1995protein}
Alexei~V Finkelstein, Azat~Ya Badretdinov, and Alexander~M Gutin.
\newblock Why do protein architectures have boltzmann-like statistics?
\newblock {\em Proteins: Structure, Function, and Bioinformatics},
  23(2):142--150, 1995.

\bibitem{finkelstein1995boltzmann}
Alexei~V Finkelstein, Alexander~M Gutin, and Azat~Ya Badretdinov.
\newblock Boltzmann-like statistics of protein architectures: Origins and
  consequences.
\newblock {\em Proteins: Structure, Function, and Engineering}, pages 1--26,
  1995.

\bibitem{gao2023hierarchical}
Ziqi Gao, Chenran Jiang, Jiawen Zhang, Xiaosen Jiang, Lanqing Li, Peilin Zhao,
  Huanming Yang, Yong Huang, and Jia Li.
\newblock Hierarchical graph learning for protein--protein interaction.
\newblock {\em Nature Communications}, 14(1):1093, 2023.

\bibitem{go1983dynamics}
Nobuhiro Go, Tosiyuki Noguti, and Testuo Nishikawa.
\newblock Dynamics of a small globular protein in terms of low-frequency
  vibrational modes.
\newblock {\em Proceedings of the National Academy of Sciences},
  80(12):3696--3700, 1983.

\bibitem{guidolin2023morse}
Andrea Guidolin and Claudia Landi.
\newblock Morse inequalities for the koszul complex of multi-persistence.
\newblock {\em Journal of Pure and Applied Algebra}, 227(7):107319, 2023.

\bibitem{gulen2023generalization}
Aziz~Burak G{\"u}len, Facundo M{\'e}moli, Zhengchao Wan, and Yusu Wang.
\newblock A generalization of the persistent laplacian to simplicial maps.
\newblock {\em arXiv preprint arXiv:2302.03771}, 2023.

\bibitem{hamley2012amyloid}
Ian~W Hamley.
\newblock The amyloid beta peptide: a chemist’s perspective. role in
  alzheimer’s and fibrillization.
\newblock {\em Chemical reviews}, 112(10):5147--5192, 2012.

\bibitem{hammer2008amyloids}
Neal~D Hammer, Xuan Wang, Bryan~A McGuffie, and Matthew~R Chapman.
\newblock Amyloids: friend or foe?
\newblock {\em Journal of Alzheimer's disease}, 13(4):407--419, 2008.

\bibitem{irvine2008protein}
G~Brent Irvine, Omar~M El-Agnaf, Ganesh~M Shankar, and Dominic~M Walsh.
\newblock Protein aggregation in the brain: the molecular basis for
  alzheimer’s and parkinson’s diseases.
\newblock {\em Molecular medicine}, 14:451--464, 2008.

\bibitem{jiang2021topological}
Yi~Jiang, Dong Chen, Xin Chen, Tangyi Li, Guo-Wei Wei, and Feng Pan.
\newblock Topological representations of crystalline compounds for the
  machine-learning prediction of materials properties.
\newblock {\em npj computational materials}, 7(1):28, 2021.

\bibitem{jucker2013self}
Mathias Jucker and Lary~C Walker.
\newblock Self-propagation of pathogenic protein aggregates in
  neurodegenerative diseases.
\newblock {\em Nature}, 501(7465):45--51, 2013.

\bibitem{laga2018survey}
Hamid Laga.
\newblock A survey on nonrigid 3d shape analysis.
\newblock {\em Academic Press Library in Signal Processing, Volume 6}, pages
  261--304, 2018.

\bibitem{lee1971interpretation}
Byungkook Lee and Frederic~M Richards.
\newblock The interpretation of protein structures: estimation of static
  accessibility.
\newblock {\em Journal of molecular biology}, 55(3):379--IN4, 1971.

\bibitem{xi2024Top}
Xiaoxi Lin, Yunpeng Zi, Youqi Tao, Fengchun Lei, Jie Wu, and Cong Liu.
\newblock Geometrical mining of conformational change hot spots from
  well-folded protein to their fibrillar state.
\newblock {\em unpublished}, 2024.

\bibitem{liu2023algebraic}
Jian Liu, Jingyan Li, and Jie Wu.
\newblock The algebraic stability for persistent laplacians.
\newblock {\em arXiv preprint arXiv:2302.03902}, 2023.

\bibitem{liu2022biomolecular}
Jian Liu, Ke-Lin Xia, Jie Wu, Stephen Shing-Toung Yau, and Guo-Wei Wei.
\newblock Biomolecular topology: Modelling and analysis.
\newblock {\em Acta Mathematica Sinica, English Series}, 38(10):1901--1938,
  2022.

\bibitem{liu2021hypergraph}
Xiang Liu, Xiangjun Wang, Jie Wu, and Kelin Xia.
\newblock Hypergraph-based persistent cohomology (hpc) for molecular
  representations in drug design.
\newblock {\em Briefings in Bioinformatics}, 22(5):bbaa411, 2021.

\bibitem{loiseaux2024framework}
David Loiseaux, Mathieu Carri{\`e}re, and Andrew Blumberg.
\newblock A framework for fast and stable representations of multiparameter
  persistent homology decompositions.
\newblock {\em Advances in Neural Information Processing Systems}, 36, 2024.

\bibitem{maria2014gudhi}
Cl{\'e}ment Maria, Jean-Daniel Boissonnat, Marc Glisse, and Mariette Yvinec.
\newblock The gudhi library: Simplicial complexes and persistent homology.
\newblock In {\em Mathematical Software--ICMS 2014: 4th International Congress,
  Seoul, South Korea, August 5-9, 2014. Proceedings 4}, pages 167--174.
  Springer, 2014.

\bibitem{matthes2023molecular}
Dirk Matthes and Bert~L de~Groot.
\newblock Molecular dynamics simulations reveal the importance of amyloid-beta
  oligomer $\beta$-sheet edge conformations in membrane permeabilization.
\newblock {\em Journal of Biological Chemistry}, 299(4), 2023.

\bibitem{michaels2020dynamics}
Thomas~CT Michaels, Andela {\v{S}}ari{\'c}, Samo Curk, Katja Bernfur, Paolo
  Arosio, Georg Meisl, Alexander~J Dear, Samuel~IA Cohen, Christopher~M Dobson,
  Michele Vendruscolo, et~al.
\newblock Dynamics of oligomer populations formed during the aggregation of
  alzheimer’s a$\beta$42 peptide.
\newblock {\em Nature chemistry}, 12(5):445--451, 2020.

\bibitem{michaels2018chemical}
Thomas~CT Michaels, An{\dj}ela {\v{S}}ari{\'c}, Johnny Habchi, Sean Chia, Georg
  Meisl, Michele Vendruscolo, Christopher~M Dobson, and Tuomas~PJ Knowles.
\newblock Chemical kinetics for bridging molecular mechanisms and macroscopic
  measurements of amyloid fibril formation.
\newblock {\em Annual review of physical chemistry}, 69:273--298, 2018.

\bibitem{moraleda2019computational}
Rodrigo~Rojas Moraleda, Nektarios Valous, Wei Xiong, and Niels Halama.
\newblock {\em Computational Topology for Biomedical Image and Data Analysis:
  Theory and Applications}.
\newblock CRC Press, 2019.

\bibitem{morgan2022transient}
Gareth~J Morgan.
\newblock Transient disorder along pathways to amyloid.
\newblock {\em Biophysical Chemistry}, 281:106711, 2022.

\bibitem{muller2021discrete}
Christian M{\"u}ller and Amir Vaxman.
\newblock Discrete curvature and torsion from cross-ratios.
\newblock {\em Annali di Matematica Pura ed Applicata (1923-)}, 200:1935--1960,
  2021.

\bibitem{nguyen2024structural}
Binh~An Nguyen, Virender Singh, Shumaila Afrin, Anna Yakubovska, Lanie Wang,
  Yasmin Ahmed, Rose Pedretti, Maria del~Carmen Fernandez-Ramirez, Preeti
  Singh, Maja Pekala, et~al.
\newblock Structural polymorphism of amyloid fibrils in attr amyloidosis
  revealed by cryo-electron microscopy.
\newblock {\em Nature communications}, 15(1):1--12, 2024.

\bibitem{nguyen2020review}
Duc~Duy Nguyen, Zixuan Cang, and Guo-Wei Wei.
\newblock A review of mathematical representations of biomolecular data.
\newblock {\em Physical Chemistry Chemical Physics}, 22(8):4343--4367, 2020.

\bibitem{nguyen2019mathematical}
Duc~Duy Nguyen, Zixuan Cang, Kedi Wu, Menglun Wang, Yin Cao, and Guo-Wei Wei.
\newblock Mathematical deep learning for pose and binding affinity prediction
  and ranking in d3r grand challenges.
\newblock {\em Journal of computer-aided molecular design}, 33:71--82, 2019.

\bibitem{nicolau2011topology}
Monica Nicolau, Arnold~J Levine, and Gunnar Carlsson.
\newblock Topology based data analysis identifies a subgroup of breast cancers
  with a unique mutational profile and excellent survival.
\newblock {\em Proceedings of the National Academy of Sciences},
  108(17):7265--7270, 2011.

\bibitem{ofori2021application}
Dorcas Ofori-Boateng, Huikyo Lee, Krzysztof~M Gorski, Michael~J Garay, and
  Yulia~R Gel.
\newblock Application of topological data analysis to multi-resolution matching
  of aerosol optical depth maps.
\newblock {\em Frontiers in Environmental Science}, 9:684716, 2021.

\bibitem{ohanuba2023application}
Felix~Obi Ohanuba, Mohd~Tahir Ismail, and Majid Khan~Majahar Ali.
\newblock Application of topological data analysis to flood disaster management
  in nigeria.
\newblock {\em Environmental Engineering Research}, 28(5), 2023.

\bibitem{peng2020protein}
Chao Peng, John~Q Trojanowski, and Virginia M-Y Lee.
\newblock Protein transmission in neurodegenerative disease.
\newblock {\em Nature Reviews Neurology}, 16(4):199--212, 2020.

\bibitem{penner2022protein}
Robert Penner.
\newblock Protein geometry, function, and mutation.
\newblock {\em Notices of the International Consortium of Chinese
  Mathematicians}, 10(2):1--10, 2022.

\bibitem{penner2020backbone}
Robert~C Penner.
\newblock Backbone free energy estimator applied to viral glycoproteins.
\newblock {\em Journal of Computational Biology}, 27(10):1495--1508, 2020.

\bibitem{pritchard2023persistent}
Ysanne Pritchard, Aikta Sharma, Claire Clarkin, Helen Ogden, Sumeet Mahajan,
  and Rub{\'e}n~J S{\'a}nchez-Garc{\'\i}a.
\newblock Persistent homology analysis distinguishes pathological bone
  microstructure in non-linear microscopy images.
\newblock {\em Scientific Reports}, 13(1):2522, 2023.

\bibitem{rabbani2020topological}
Bijak Rabbani and Widijanto~S Nugroho.
\newblock Topological signatures as complementary features for deep learning
  model: A survey.
\newblock In {\em 2020 6th International Conference on Science and Technology
  (ICST)}, volume~1, pages 1--6. IEEE, 2020.

\bibitem{RAMACHANDRAN196395}
Gopalasamudram~N Ramachandran, Chandrashekharan Ramakrishnan, and Viswanathan~S
  Sasisekharan.
\newblock Stereochemistry of polypeptide chain configurations.
\newblock {\em Journal of Molecular Biology}, 7(1):95--99, 1963.

\bibitem{ren2018weighted}
Shiquan Ren, Chengyuan Wu, and Jie Wu.
\newblock Weighted persistent homology.
\newblock {\em The Rocky Mountain Journal of Mathematics}, 48(8):2661--2687,
  2018.

\bibitem{ripp2024multi}
Monica Ripp, Ted Brzinski, Alec Wallach, Eric Whyman, and Ivan Tseytlin.
\newblock Multi-persistence analysis of granular contact networks approaching
  failure.
\newblock {\em Bulletin of the American Physical Society}, 2024.

\bibitem{sawaya2021expanding}
Michael~R Sawaya, Michael~P Hughes, Jose~A Rodriguez, Roland Riek, and David~S
  Eisenberg.
\newblock The expanding amyloid family: Structure, stability, function, and
  pathogenesis.
\newblock {\em Cell}, 184(19):4857--4873, 2021.

\bibitem{shultz2023applications}
Christopher Shultz.
\newblock Applications of topological data analysis in economics.
\newblock {\em Available at SSRN 4378151}, 2023.

\bibitem{singh2023topological}
Yashbir Singh, Colleen~M Farrelly, Quincy~A Hathaway, Tim Leiner, Jaidip
  Jagtap, Gunnar~E Carlsson, and Bradley~J Erickson.
\newblock Topological data analysis in medical imaging: current state of the
  art.
\newblock {\em Insights into Imaging}, 14(1):58, 2023.

\bibitem{smith2021topological}
Alexander~D Smith, Pawe{\l} D{\l}otko, and Victor~M Zavala.
\newblock Topological data analysis: concepts, computation, and applications in
  chemical engineering.
\newblock {\em Computers \& Chemical Engineering}, 146:107202, 2021.

\bibitem{steinebrei2022cryo}
Maximilian Steinebrei, Juliane Gottwald, Julian Baur, Christoph R{\"o}cken, Ute
  Hegenbart, Stefan Sch{\"o}nland, and Matthias Schmidt.
\newblock Cryo-em structure of an attrwt amyloid fibril from systemic
  non-hereditary transthyretin amyloidosis.
\newblock {\em Nature Communications}, 13(1):6398, 2022.

\bibitem{strodel2021amyloid}
Birgit Strodel.
\newblock Amyloid aggregation simulations: challenges, advances and
  perspectives.
\newblock {\em Current opinion in structural biology}, 67:145--152, 2021.

\bibitem{sullivan2008curves}
John~M Sullivan.
\newblock Curves of finite total curvature.
\newblock {\em Discrete differential geometry}, pages 137--161, 2008.

\bibitem{tompa2009structural}
Peter Tompa.
\newblock Structural disorder in amyloid fibrils: its implication in dynamic
  interactions of proteins.
\newblock {\em The FEBS journal}, 276(19):5406--5415, 2009.

\bibitem{townsend2020representation}
Jacob Townsend, Cassie~Putman Micucci, John~H Hymel, Vasileios Maroulas, and
  Konstantinos~D Vogiatzis.
\newblock Representation of molecular structures with persistent homology for
  machine learning applications in chemistry.
\newblock {\em Nature communications}, 11(1):3230, 2020.

\bibitem{wang2020topology}
Menglun Wang, Zixuan Cang, and Guo-Wei Wei.
\newblock A topology-based network tree for the prediction of protein--protein
  binding affinity changes following mutation.
\newblock {\em Nature Machine Intelligence}, 2(2):116--123, 2020.

\bibitem{wu2023metabolomic}
Shuang Wu, Xiang Liu, Ang Dong, Claudia Gragnoli, Christopher Griffin, Jie Wu,
  Shing-Tung Yau, and Rongling Wu.
\newblock The metabolomic physics of complex diseases.
\newblock {\em Proceedings of the National Academy of Sciences},
  120(42):e2308496120, 2023.

\bibitem{xia2015persistent}
Kelin Xia, Xin Feng, Yiying Tong, and Guo~Wei Wei.
\newblock Persistent homology for the quantitative prediction of fullerene
  stability.
\newblock {\em Journal of computational chemistry}, 36(6):408--422, 2015.

\bibitem{xue2008systematic}
Wei-Feng Xue, Steve~W Homans, and Sheena~E Radford.
\newblock Systematic analysis of nucleation-dependent polymerization reveals
  new insights into the mechanism of amyloid self-assembly.
\newblock {\em Proceedings of the National Academy of Sciences},
  105(26):8926--8931, 2008.

\bibitem{younes2010shapes}
Laurent Younes.
\newblock {\em Shapes and diffeomorphisms}, volume 171.
\newblock Springer, 2010.

\bibitem{zeldenrust2010familial}
Steven~R Zeldenrust and Merrill~D Benson.
\newblock Familial and senile amyloidosis caused by transthyretin.
\newblock {\em Protein misfolding diseases: current and emerging principles and
  therapies}, pages 795--815, 2010.

\bibitem{zomorodian2004computing}
Afra Zomorodian and Gunnar Carlsson.
\newblock Computing persistent homology.
\newblock In {\em Proceedings of the twentieth annual symposium on
  Computational geometry}, pages 347--356, 2004.

\end{thebibliography}
\end{document}